\newcommand{\R}{\mathbb{R}}
\newtheorem{theorem}{Theorem}[section]
\newtheorem{proposition}{Proposition}[section]
\newtheorem{lemma}{Lemma}[section]
\newtheorem{corollary}{Corollary}[section]
\newtheorem{definition}{Definition}[section]
\newcommand{\p}{\partial}
\newcommand{\bb}{\begin{equation}}
\newcommand{\ee}{\end{equation}}
\newcommand{\ba}{\begin{array}}
\newcommand{\ea}{\end{array}}
\newcommand{\f}{\frac}
\newcommand{\ds}{\displaystyle}
\newcommand{\al}{\alpha}
\newcommand{\be}{\beta}
\newcommand{\de}{\delta}
\newcommand{\sign}{\text{sign}\,}
\numberwithin{equation}{section}
\title{Integrability, existence of global solutions and wave breaking criteria for a generalisation of the Camassa-Holm equation}
\author{
 Priscila~Leal~da Silva\thanks{Corresponding author.} \\
  Centro de Matem\'atica, Computa\c{c}\~ao e Cogni\c{c}\~ao\\
 Universidade Federal do ABC\\
 Santo Andr\'e, Brazil \\
  \texttt{priscila.silva@ufabc.edu.br} \\
  \texttt{pri.leal.silva@gmail.com} \\
  %% examples of more authors
   \And
 Igor~Leite~Freire \\
 Mathematical Institute,\\ Silesian University in Opava,\\
Na Rybn\'\i{}\v{c}ku, 1, 74601, Opava, Czech Republic\\
Centro de Matem\'atica, Computa\c{c}\~ao e Cogni\c c\~ao,\\ Universidade Federal do ABC,%\\ Avenida dos Estados, $5001$, Bairro Bangu,$09.210-580$, 
\\Santo Andr\'e, SP - Brazil \\
  \texttt{igor.freire@ufabc.edu.br} \\
  \texttt{igor.leite.freire@gmail.com} \\
  %% \AND
  %% Coauthor \\
  %% Affiliation \\
  %% Address \\
  %% \texttt{email} \\
  %% \And
  %% Coauthor \\
  %% Affiliation \\
  %% Address \\
  %% \texttt{email} \\
  %% \And
  %% Coauthor \\
  %% Affiliation \\
  %% Address \\
  %% \texttt{email} \\
}
\begin{document}
\maketitle

\begin{abstract}
Recent generalisations of the Camassa-Holm equation are studied from the point of view of existence of global solutions, criteria for wave breaking phenomena and integrability. We provide conditions, based on lower bounds for the first spatial derivative of local solutions, for global well-posedness for the family under consideration in Sobolev spaces. Moreover, we prove that wave breaking phenomena occurs under certain mild hypothesis. Regarding integrability, we apply the machinery developed by Dubrovin [Commun. Math. Phys. 267, 117--139 (2006)] to prove that there exists a unique bi--hamiltonian structure for the equation only when it is reduced to the Dullin--Gotwald--Holm equation. Our results suggest that a recent shallow water model incorporating Coriollis efects is integrable only in specific situations. Finally, to finish the scheme of geometric integrability of the family of equations initiated in a previous work, we prove that the Dullin--Gotwald--Holm equation describes pseudo-spherical surfaces.
\end{abstract}

% keywords can be removed
\keywords{Camassa-Holm equation \and wave breaking \and global well-posedness \and integrability}

\section{Introduction}

In the seminal work \cite{CH}, Camassa and Holm considered the equation
\begin{align}\label{CH}
    \begin{aligned}
    m_t &+ um_x + 2u_xm = \alpha u_x, \quad u=u(t,x),
    \end{aligned}
\end{align}
where $(t,x)\in [0,\infty)\times \R$, $m=u-u_{xx}$ and $\alpha\in\R$, and deduced several important properties for it. Equation \eqref{CH}, today known as Camassa-Holm (CH) equation, was firstly obtained in \cite{fokas} and later rediscovered by Camassa and Holm in \cite{CH} as a model for shallow water waves and was shown to be an integrable equation admitting a bi-Hamiltonian formulation and a Lax pair \cite{CHH}. Moreover, Camassa and Holm explicitly presented continuous peaked wave solutions for \eqref{CH} that collide without changing their shapes and speeds. In many of the works that followed \cite{CH}, researchers were interested in understanding the mathematical and physical properties of solutions of \eqref{CH} and were also seeking further generalisations or correlated equations that would share some or even the same properties. Among the generalisations, we mention the Dullin-Gottwald-Holm (DGH) equation
\begin{align}\label{DGH}
    \begin{aligned}
    m_t &+ um_x + 2u_xm = \alpha u_x +\Gamma u_{xxx},\quad \alpha,\Gamma\in\R,
    \end{aligned}
\end{align}
deduced by Dullin, Gottwald and Holm in \cite{DGH1} as a shallow water model in a similar way as \eqref{CH} was derived and proven to be integrable in the same paper.

In the particular case of solutions, well-posedness of \eqref{CH} in Sobolev and similar spaces has been a trending topic and has been widely studied by several authors. Danchin \cite{Dan}, Himonas and Misiolek \cite{HM}, Li and Olver \cite{LO} and Rodrigues-Blanco \cite{RB} have determined local well-posedness for initial data in $H^s(\R)$ with $s>3/2$, while a powerful criteria for global extension of the local solutions was proven by Constantin and Escher in \cite{CE}. In \cite{byers}, Beyers showed that the Cauchy problem to the CH equation is not locally well-posed for any $s<3/2$. Moreover, in the work \cite{HM}, the authors also proved that uniform continuity of solutions for initial data in $H^s(\R)$ with $s<3/2$ does not hold, which makes the value $s=3/2$ critical since there is no continuous embedding between $H^{3/2}(\R)$ and the space of bounded Lipschitz functions and, therefore, \eqref{CH} may not be solved by a standard iterative scheme, see \cite{Dan1}. For this reason, Danchin \cite{Dan1} considered the closely related Besov spaces $B^{3/2}_{2,r}(\R)$ and determined that \eqref{CH} is locally well-posed in $B^{3/2}_{2,1}(\R)$ and ill-posed in $B^{3/2}_{2,\infty}(\R)$. Very recently, Guo, Liu, Molinet and Yiu \cite{GLMY} completed the scheme by determining ill-posedness in $B^{3/2}_{2,2}(\R) = H^{3/2}(\R)$.

Regarding other polynomial Camassa-Holm type equations discovered since \cite{CH}, the Degasperis-Procesi \cite{DP}, DGH \cite{DGH1}, Novikov \cite{HW,Nov} and modified Camassa-Holm \cite{Fok,OR,Qiao} equations arose as integrable, but many other (non-integrable or whose integrability has not been determined) have been discovered, e.g. \cite{anco,DHH,aims,cnmac,HM1}. Our main interest in this paper is to go further with the investigation initiated in \cite{priigorjde} with respect to the equation
\begin{align}\label{cauchy}
    m_t + um_x + 2u_xm = \alpha u_x + \beta u^2u_x+\gamma u^3u_x + \Gamma u_{xxx},
\end{align}
where $m=u-u_{xx}$, $\alpha,\beta,\gamma$ and $\Gamma$ are real arbitrary constants. Equation \eqref{cauchy} clearly includes the CH and DGH equations as particular cases, and a more recent and also physically relevant member of \eqref{cauchy} is
\bb\label{chines-fluid}
\ba{l}
\ds{m_t+um_x+2u_xm+cu_x-\f{\beta_0}{\beta}u_{xxx}+\f{\omega_1}{\al^2}u^2u_x+\f{\omega_2}{\alpha^3}u^3u_x=0},
\ea
\ee
with
\begin{align*}
c&=\ds{\sqrt{1+\Omega^2}-\Omega,\quad\al=\f{c^2}{1+c^2},\quad\beta_0=\f{c(c^4+6c^2-1)}{6(c^2+1)^2},\quad \beta=\f{3c^4+8c^2-1}{6(c^2+1)^2}},\\
\omega_1&=\ds{-\f{3c(c^2-1)(c^2-2)}{2(1+c^2)^3},\quad\omega_2=\f{(c^2-1)^2(c^2-2)(8c^2-1)}{2(1+c^2)^5}},
\end{align*}

where $c$ relates to the constant rotational frequency due to Coriolis effect. The latter equation has been recently proposed and considered in \cite{chines-adv,gui-jnl,GLS,chines-jde}, see also \cite{priigorjde,igor}.
%Of particular interest in this paper is the recent rotating Camassa-Holm equation
%\
%, in the more general mathematical form
%    m_t+um_x+2u_xm+cu_x=\f{\beta_0}{\beta}u_{xxx}-\f{\omega_1}{\al^2}u^2u_x-\f{\omega_2}{\alpha^3}u^3u_x=0,
%\end{align*}

Regarding equation \eqref{cauchy}, observe that by taking 
\begin{align}\label{h}
h(u) = (\alpha+\Gamma) u + \frac{\beta}{3}u^3+ \frac{\gamma}{4}u^4
\end{align}
it can be rewritten as
\begin{align}\label{gCH}
    m_t + (u+\Gamma)m_x + 2 u_xm = \partial_x h(u).
\end{align}

Among the results of our previous paper \cite{priigorjde} with respect to \eqref{cauchy}, we provided a proof of local well-posedness in Sobolev spaces. More explicitly, we showed that if the initial data $u_0=u_0(x)$ is in $H^s(\R)$ for $s>3/2$, then there exists a maximal time of existence $T>0$, also called \textit{lifespan}, such that the solution $u\in C([0,T),H^s(\R))\cap C^1([0,T),H^{s-1}(\R))$ exists and is unique. Although this lifespan $T$ guarantees the existence of a local solution, the result nothing says about how $u$ behaves beyond $T$.

In this paper, one of our goals is to investigate how the solution behaves as time reaches the lifespan $T$. It is said that a solution $u$ is global if $T=\infty$, and in the case $T<\infty$ the solution is said to blow-up at a finite time. 

\begin{theorem}\label{teo1.1}
Given $u_0$ in $H^s(\R)$, with $s>3/2$, let $u$ be the unique solution of the Cauchy problem of \eqref{cauchy} and $m=u_0-u''_{0}$. If $m_0\in L^1(\R)\cap H^1(\R)$ does not change its signal and $\sign{(m)}=\sign{(m_0)}$, then $u$ exists globally.
\end{theorem}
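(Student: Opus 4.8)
The plan is to combine a blow-up criterion with the sign-definiteness of $m$ so as to produce a uniform-in-time bound on $\|u_x\|_{L^\infty}$, which then rules out any finite-time singularity. First I would record (or establish) the precise blow-up scenario for \eqref{cauchy}: if $T<\infty$ is the lifespan of the solution emanating from $u_0\in H^s(\R)$, $s>3/2$, then necessarily $\liminf_{t\to T^-}\left(\inf_{x\in\R}u_x(t,x)\right)=-\infty$. This is the analogue of the Constantin--Escher criterion \cite{CE}, proved by the usual energy method: differentiating $\|u\|_{H^s}^2$ in time and using commutator (Kato--Ponce type) estimates, one bounds $\tfrac{d}{dt}\|u\|_{H^s}$ by a quantity depending only on $\|u\|_{L^\infty}$ and $\|u_x\|_{L^\infty}$, so the $H^s$ norm can blow up only if one of these sup-norms does, and a Gronwall argument isolates $\inf_x u_x$ as the decisive quantity. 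Consequently, to conclude $T=\infty$ it suffices to bound $\|u_x(t,\cdot)\|_{L^\infty}$ uniformly on $[0,T)$.

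Next I would exploit the sign hypothesis through the Green's function of $1-\partial_x^2$. Writing $p(x)=\tfrac12 e^{-|x|}$, we have $u=p\ast m$, and the explicit splitting
\[
(u-u_x)(t,x)=e^{-x}\int_{-\infty}^{x}e^{y}m(t,y)\,dy,\qquad (u+u_x)(t,x)=e^{x}\int_{x}^{\infty}e^{-y}m(t,y)\,dy
\]
shows that if $m_0\ge 0$, and hence $m(t,\cdot)\ge 0$ for all $t\in[0,T)$ by the hypothesis $\sign(m)=\sign(m_0)$, then both $u-u_x\ge0$ and $u+u_x\ge0$. Therefore $|u_x|\le u$ pointwise, so $\|u_x(t,\cdot)\|_{L^\infty}\le \|u(t,\cdot)\|_{L^\infty}$; the case $m_0\le 0$ is symmetric and gives $|u_x|\le -u$. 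In either case the problem reduces to controlling $\|u(t,\cdot)\|_{L^\infty}$.

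The remaining ingredient is conservation of the $L^1$-mass of $m$. Using \eqref{gCH}, I would compute
\[
\frac{d}{dt}\int_{\R}m\,dx=\int_{\R}\Big(-(u+\Gamma)m_x-2u_xm+\partial_x h(u)\Big)\,dx=-\int_{\R}u_x m\,dx=0,
\]
where the boundary integrations vanish, $\partial_x h(u)$ integrates to zero, and $\int_{\R}u_x m\,dx=\tfrac12\int_{\R}\partial_x(u^2-u_x^2)\,dx=0$; crucially the higher-order nonlinearities $\beta u^2u_x+\gamma u^3u_x$ sit inside $\partial_x h(u)$ and so do not obstruct the argument. Since $m$ does not change sign, this yields $\|m(t,\cdot)\|_{L^1}=\big|\int_\R m\,dx\big|=\|m_0\|_{L^1}$ for all $t\in[0,T)$, whence Young's inequality gives $\|u(t,\cdot)\|_{L^\infty}=\|p\ast m\|_{L^\infty}\le\|p\|_{L^\infty}\|m(t,\cdot)\|_{L^1}=\tfrac12\|m_0\|_{L^1}$. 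Combining the three steps, $\|u_x(t,\cdot)\|_{L^\infty}\le\tfrac12\|m_0\|_{L^1}$ uniformly on $[0,T)$, so $\inf_x u_x$ stays bounded below and the blow-up criterion forces $T=\infty$.

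I expect the main obstacle to be the careful justification of the blow-up criterion, rather than the sign argument, which is soft once the criterion is available. In particular one must verify that the extra nonlinear terms introduce no dependence on norms beyond $\|u\|_{L^\infty}$ and $\|u_x\|_{L^\infty}$—they do not, being lower order with $\|u\|_{L^\infty}$ itself controlled—and one must ensure enough regularity, guaranteed by $u_0\in H^s$ together with $m_0\in H^1\cap L^1$, to legitimise the integrations by parts and the differentiation under the integral sign in the conservation law.
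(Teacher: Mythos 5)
Your proposal is correct and follows essentially the same route as the paper: a blow-up criterion asserting that the solution persists as long as $u_x$ stays bounded below (the paper's Proposition \ref{prop2.3}, proved there via a Gronwall estimate on $\|m\|_{H^1}^2$ at the level $s=3$ followed by a density argument, rather than a Kato--Ponce estimate at general $s$), combined with the uniform bound on $u_x$ coming from the sign-definiteness of $m$ and the conservation of $\int_\R m\,dx$. The only cosmetic difference is that you rederive the pointwise bound $\vert u_x\vert\leq\vert u\vert\leq\tfrac12\Vert m_0\Vert_{L^1}$ from the Green's function splitting and Young's inequality, whereas the paper imports the equivalent estimate $u_x\geq-\Vert m_0\Vert_{L^1}$ from its Lemma \ref{lem2.2} (quoted from the authors' earlier work); your two-sided bound also means you never actually need the sharp one-sided form of the blow-up criterion you state.
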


One of the problems of Theorem \ref{teo1.1} is the requirement that $m_0$ does not change sign. It is possible, however, to remove this condition and allow it to change sign once and still be able to guarantee existence of global solutions, given by Theorem \ref{teo1.2}.

\begin{theorem}\label{teo1.2}
Let $u_0\in H^s(\R)$ for $s>3/2$, $m_0=u_0-u_0''$ and $u$ be the unique corresponding solution of \eqref{cauchy}. If $\sign{(m)}=\sign{(m_0)}$ and there is $x_0\in\R$ such that $m_0(x)\leq 0$ if $x\leq x_0$ and $m_0(x)\geq 0$ for $x\geq x_0$, then $-\Vert u_0\Vert_{H^1}\leq u_x(t,x)$. In particular, the solution $u$ does not blow-up at a finite time.
\end{theorem}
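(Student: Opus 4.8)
The plan is to control $u_x$ pointwise by combining the sign configuration of $m$ with the convolution representation $u = G\ast m$, where $G(x)=\tfrac12 e^{-|x|}$ is the Green's function of $1-\partial_x^2$. Setting
\begin{equation}
P(t,x) = \frac12\int_{-\infty}^{x} e^{y-x}\,m(t,y)\,dy, \qquad Q(t,x) = \frac12\int_{x}^{\infty} e^{x-y}\,m(t,y)\,dy,
\end{equation}
one has $u = P+Q$ and $u_x = Q-P$, so that $u-u_x = 2P$ and $u+u_x = 2Q$. A preliminary ingredient I would first record is the conservation of the $H^1$ norm: pairing the $m$-equation \eqref{gCH} with $u$ in $L^2(\R)$, the convective contribution $\int u\,[(u+\Gamma)m_x + 2u_x m]\,dx$ vanishes after integration by parts (using $\int u_x m\,dx = 0$), while $\int u\,\partial_x h(u)\,dx = 0$ because $u\,h'(u)\,u_x$ is an exact $x$-derivative of a function of $u$ vanishing at $\pm\infty$; hence $\tfrac12\frac{d}{dt}\|u\|_{H^1}^2 = 0$ and $\|u(t)\|_{H^1} = \|u_0\|_{H^1}$ throughout the lifespan. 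Together with the elementary estimate $u(t,x)^2 = \int_{-\infty}^x \partial_y(u^2)\,dy \le \int_{\R}(u^2+u_y^2)\,dy = \|u(t)\|_{H^1}^2$, this yields $|u(t,x)| \le \|u_0\|_{H^1}$ for all $t,x$.

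Next I would translate the hypothesis into a usable sign structure for $m(t,\cdot)$. Since $\sign{(m)}=\sign{(m_0)}$ and the characteristic flow $q(t,\cdot)$ associated with \eqref{gCH} is an orientation-preserving diffeomorphism, the single sign change of $m_0$ at $x_0$ propagates to a single sign change at $\xi(t):=q(t,x_0)$; that is, at each time $t$ one has $m(t,y)\le 0$ for $y\le\xi(t)$ and $m(t,y)\ge 0$ for $y\ge\xi(t)$. With this in hand the decomposition closes at once: for $x\le\xi(t)$ the integrand defining $P$ satisfies $m(t,y)\le 0$ on $(-\infty,x]$, whence $u-u_x=2P\le 0$, i.e.\ $u_x\ge u$; symmetrically, for $x\ge\xi(t)$ the integrand of $Q$ is nonnegative on $[x,\infty)$, so $u+u_x=2Q\ge 0$, i.e.\ $u_x\ge -u$.

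Combining the two cases with $|u(t,x)|\le\|u_0\|_{H^1}$ gives $u_x(t,x)\ge -\|u_0\|_{H^1}$ on the whole real line, for every $t$ in the lifespan. Finite-time singularity formation for \eqref{cauchy} occurs only through $\inf_x u_x\to-\infty$ (the wave-breaking/blow-up criterion underlying Theorem \ref{teo1.1}); since $u_x$ is now bounded below uniformly in time, no such blow-up can occur and the solution is global. I expect the main obstacle to be rigorously justifying the sign structure of $m(t,\cdot)$ — both verifying that the lone sign change is genuinely transported by the flow to $\xi(t)$, and making sense of the pointwise convolution identities at the stated regularity $s>3/2$, where $m$ is a priori only a distribution. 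The natural remedy is to establish the inequality first for smooth initial data (for which $m(t,\cdot)\in L^1(\R)\cap H^1(\R)$ and the flow argument is classical) and then to pass to general $u_0\in H^s(\R)$ by density together with the continuous dependence of the solution map, noting that $u_x$ converges uniformly for $s>3/2$ so that the lower bound survives the limit.
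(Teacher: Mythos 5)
Your proposal is correct and follows essentially the same route as the paper: the decomposition $u=P+Q$, $u_x=Q-P$ is exactly the paper's splitting of the convolution $u=\tfrac12 e^{-|x|}\ast m$, the single sign change of $m(t,\cdot)$ transported to $q(t,x_0)$ is used identically to get $u_x\ge u$ to the left and $u_x\ge -u$ to the right, and the conclusion $u_x\ge -\Vert u\Vert_{L^\infty}\ge -\Vert u_0\Vert_{H^1}$ is the same. The extra material you include (deriving $H^1$ conservation rather than citing it, and the density remark for $s>3/2$) is sound but does not change the argument.
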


Given the possibility of extending local to global solutions, it is natural to question under which conditions, if any at all, a solution blows-up. In case of existence of such conditions, we may be tempted to search for the occurrence of wave breaking. In Physics, a wave breaking is a phenomena in which the wave energy becomes infinite and its crest overturns. A mathematical explanation for it is that if we section a wave and observe the behaviour of tangent lines, the wave breaking is characterized by a vertical tangent line when time reaches the lifespan $T$.

Formally speaking, we call a finite time blow-up a wave breaking if
\begin{align*}
    \sup\limits_{(t,x)\in[0,T)\times \R}\vert u(t,x)\vert <\infty, \quad \limsup \{\sup\vert u_{x}(t,x)\vert\} =\infty.
\end{align*}
As we will see in Proposition \ref{prop2.3}, a necessary condition for wave breaking of \eqref{cauchy} is the unboundedness of the first spatial derivative $u_x$ in $[0,T)\times \R$. Moreover, in Section \ref{Sec3} we will introduce a parameter that will reduce our analysis of wave breaking of \eqref{cauchy} to a problem similar to the Camassa-Holm equation \eqref{CH}, see \cite{Escher1}. Regarding wave breaking, our main result is given by the following theorem:

\begin{theorem}\label{teo1.3}
Given $u_0 \in H^{s}(\R)$, with $s>3/2$, let $u\in C^1([0,T),H^{s-1}(\R))$ be the corresponding solution of \eqref{cauchy}. Additionally, suppose there exists $x_0\in\R$ such that
\begin{align*}
    \sqrt{2\sigma} u_0'(x_0)< \min \{-\Vert u_0\Vert_{H^1},-\Vert u_0\Vert_{H^1}^{p/2}\},
\end{align*}
where $\sigma\in\R$ is such that $0<\sigma \leq \displaystyle{\frac{1}{1+36K}}$, $K=4\max\{\vert \alpha\vert, \vert \beta\vert/3, \vert\gamma\vert/4,\vert \Gamma \vert\}$ and $p$ is such that
\begin{align*}
    \Vert u_0\Vert_{H^1}^p=\max\{\Vert u_0\Vert_{H^1},\Vert u_0\Vert_{H^1}^3,\Vert u_0\Vert_{H^1}^4\}.
\end{align*}
Then $u$ breaks at finite time.
\end{theorem}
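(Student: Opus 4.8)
The plan is to recast \eqref{cauchy} as a nonlocal transport equation for $u$ and then extract a Camassa--Holm--type Riccati equation along characteristics. First I would apply the Helmholtz inverse $(1-\p_x^2)^{-1}$, whose kernel is $p(x)=\tfrac12 e^{-|x|}$, to the form \eqref{gCH}. Writing $\Gamma m_x=\Gamma(1-\p_x^2)u_x$ and using the identity $(1-\p_x^2)(uu_x)=uu_x-3u_xu_{xx}-uu_{xxx}$, the equation becomes
\begin{equation*}
u_t+(u+\Gamma)u_x=-\p_x\, p*\Big(u^2+\tfrac12 u_x^2\Big)+\p_x\, p*h(u),
\end{equation*}
with $h$ as in \eqref{h} and $*$ the convolution. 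I would then record that the $H^1$ energy is conserved: pairing \eqref{cauchy} with $u$ and integrating, all contributions reduce to exact derivatives (in particular $\int u_x m\,dx=0$ annihilates the $\Gamma$, cubic and quartic terms), so $\Vert u(t)\Vert_{H^1}=\Vert u_0\Vert_{H^1}=:U$ on the lifespan. Consequently $\Vert u(t)\Vert_{L^\infty}\le \tfrac{1}{\sqrt2}\,U$ by the Sobolev embedding tied to the kernel $p$.

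Next I would differentiate the nonlocal equation in $x$ and use $\p_x^2\, p*f=p*f-f$ to obtain
\begin{equation*}
u_{tx}+(u+\Gamma)u_{xx}=-\tfrac12 u_x^2+u^2-p*\Big(u^2+\tfrac12 u_x^2\Big)+p*h(u)-h(u).
\end{equation*}
Introducing the flow $q=q(t,x)$ solving $\dot q=u(t,q)+\Gamma$, $q(0,x)=x$, and setting $w(t):=u_x\big(t,q(t,x_0)\big)$, the left-hand side is exactly $w'(t)$, so
\begin{equation*}
w'(t)=-\tfrac12 w(t)^2+G(t),\qquad G(t):=u^2-h(u)-p*\Big(u^2+\tfrac12 u_x^2\Big)+p*h(u).
\end{equation*}
Because $u\in C^1$ in $x$ for $s>3/2$ while $u_{tx}$ a priori lives only in a space of lower regularity, the rigorous justification of this material derivative proceeds by the usual route of proving the identity for smooth data and passing to the limit by density and continuous dependence; I regard this as a technical rather than conceptual point.

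The crux is to bound $G$ uniformly in time by a constant controlled by $U$ and $K$. Since $p*\big(u^2+\tfrac12 u_x^2\big)\ge 0$, that term only helps; using $\Vert p*g\Vert_{L^\infty}\le\Vert g\Vert_{L^\infty}$ (as $\Vert p\Vert_{L^1}=1$), the bounds $|\al|,|\be|/3,|\gamma|/4,|\Gamma|\le K/4$, and the pointwise control of $u$, I expect an estimate of the shape $G(t)\le \tfrac{1+36K}{4}\max\{U^2,U^p\}$, where $U^p=\max\{U,U^3,U^4\}$ absorbs the linear, cubic and quartic parts of $h$ and $U^2$ comes from the $u^2$ term. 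This is the step demanding the most care, since one must track each constant so that the coefficient does not exceed $\tfrac{1}{4\sigma}$; this is exactly what the admissible range $0<\sigma\le(1+36K)^{-1}$ secures, yielding $G(t)\le \tfrac{1}{4\sigma}\max\{U^2,U^p\}=:C$.

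Finally I would close the argument by comparison. The hypothesis $\sqrt{2\sigma}\,u_0'(x_0)<\min\{-U,-U^{p/2}\}$ is equivalent, upon squaring the two negative quantities separately, to $w(0)^2=u_0'(x_0)^2>\tfrac{1}{2\sigma}\max\{U^2,U^p\}=2C$ together with $w(0)<0$. Comparing $w'=-\tfrac12 w^2+G\le -\tfrac12 w^2+C$ with the autonomous equation $y'=-\tfrac12 y^2+C$, whose trajectories issuing below $-\sqrt{2C}$ diverge to $-\infty$ in finite time, gives $w(t)\le y(t)\to-\infty$ as $t\uparrow t^*$ for some finite $t^*$. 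Hence $\inf_x u_x(t,x)\to-\infty$ at a finite time, so the blow-up criterion underlying Proposition \ref{prop2.3} forces the lifespan to satisfy $T\le t^*<\infty$; since $\Vert u(t)\Vert_{L^\infty}\le\tfrac{1}{\sqrt2}\,U$ stays bounded, the finite-time blow-up is a genuine wave breaking.
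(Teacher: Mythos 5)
Your overall strategy---recast \eqref{cauchy} as a nonlocal transport equation, differentiate in $x$, obtain a Riccati-type inequality for the slope, bound the nonlocal remainder uniformly in time via conservation of the $H^1$ norm, and close by comparison with the autonomous Riccati equation---is the same as the paper's. The one genuine methodological difference is how you kill the convective term $(u+\Gamma)u_{xx}$: you absorb it into the material derivative $w'(t)=\frac{d}{dt}u_x(t,q(t,x_0))$ along the characteristic through $x_0$, whereas the paper invokes the Constantin--Escher lemma (Lemma \ref{lem4.1}) to track $y(t)=\inf_x u_x(t,x)=u_x(t,\xi(t))$, at which point $u_{xx}(t,\xi(t))=0$ and the term simply vanishes. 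Both routes are standard; the infimum route has the small advantage of not requiring a pointwise second derivative along a curve and of directly producing the quantity whose divergence is equivalent to blow-up (via Proposition \ref{prop2.3}), while your route uses the hypothesis at $x_0$ more directly (the paper bridges this with $y(0)\le u_0'(x_0)$). Your closing comparison argument is equivalent to the paper's $\epsilon$-argument.

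There is, however, one quantitative gap in your treatment of $G$. You discard the term $p\ast\bigl(u^2+\tfrac12 u_x^2\bigr)$ using only its non-negativity, which leaves you with $u^2\le \tfrac12\Vert u_0\Vert_{H^1}^2$ from the local quadratic term; the paper instead uses the sharper convolution inequality $\tfrac{e^{-|x|}}{2}\ast\bigl(u^2+\tfrac12 u_x^2\bigr)\ge \tfrac12 u^2$ (from Escher's notes), which yields $u^2-p\ast(\cdots)\le\tfrac12 u^2\le\tfrac14\Vert u_0\Vert_{H^1}^2$. The factor of $2$ matters: with only non-negativity your bound becomes $G\le \tfrac12 U^2+cKU^p$, and for small $K$ the resulting coefficient exceeds $\tfrac{1}{4\sigma}$ with $\sigma\le(1+36K)^{-1}$, so the hypothesis $\sqrt{2\sigma}\,u_0'(x_0)<\min\{-U,-U^{p/2}\}$ no longer guarantees $w(0)^2>2C$ and the comparison argument does not close. (Your convolution bound $\Vert p\ast g\Vert_{L^\infty}\le\Vert g\Vert_{L^\infty}$ on the $h$-terms is actually sharper than the paper's factor-$2$ estimate in Proposition \ref{prop3.1}, but that gain is not enough to compensate uniformly in $K$.) The fix is simply to replace ``non-negativity only helps'' by the inequality $p\ast\bigl(u^2+\tfrac12 u_x^2\bigr)\ge\tfrac12 u^2$; with that, your constants match the theorem's and the proof is complete.
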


After the work of Camassa and Holm \cite{CH}, the study of integrable equations was turned upside down. In fact, until the 90's most efforts made in the classification of integrable partial differential equations were dedicated to evolutionary equations such as the Korteweg-de Vries equation or hyperbolic ones as the Sine-Gordon equation. The integrability of the Camassa-Holm equation \eqref{CH} was a breakthrough discovery due to the nonlocal specificity of its evolution form \eqref{CH}. As we shall see, the inverse of the operator $1-\partial_x^2$ plays an important role in the analysis of most properties of \eqref{CH} and can be the source of many theoretical issues.

Integrability is a somewhat rare property of a partial differential equation and several definitions (related but not equivalent) can be given for it, see, for instance, \cite{ablo1,ablo2,sok,olverbook}. In this paper, equation \eqref{cauchy} will be said to be integrable if it admits two compatible Hamiltonian formulations\footnote{The $-$ sign is unnecessary, but convenient in our particular case, see Section \ref{Sec4}.} 
\begin{align}
    m_t = -B_1\f{\delta {\cal H}_1}{\delta u} =- B_2\f{\delta {\cal H}_2}{\delta u},
\end{align}
where $B_1$ and $B_2$ are the Hamiltonian operators satisfying the Jacobi identity, ${\cal H}_1$ and ${\cal H}_2$ are the Hamiltonian functions such that any linear combination of Hamiltonian operators is still a Hamiltonian operator. An important consequence of the existence of such a formulation is that it is sufficient to guarantee the existence of an infinite hierarchy of linearly independent conservation laws.

It is well known that equation \eqref{cauchy} is integrable if $\beta$ and $\gamma$ vanishes and the resulting equation becomes the Dullin-Gotwald-Holm equation, see \cite{DGH1,DGH2}, but regarding arbitrary choices of $\beta$ and $\gamma$, the only result known so far shows that \eqref{cauchy} describes pseudo-spherical surfaces (geometric integrability) if $\beta=\gamma=\Gamma=0$, \textit{suggesting} integrability only for the Camassa-Holm equation, see \cite{priigorjde}. It is important to emphasize that $\Gamma=0$ was not necessarily required\footnote{In \cite{priigorjde} we replaced $u$ by $u-\Gamma$ in \eqref{cauchy}, which is equivalent to take $\Gamma=0$ in \eqref{cauchy}. Therefore, what we have done does not preserve initial data.} in \cite{priigorjde} for integrability and, therefore, geometric integrability should be recovered for $\Gamma\neq 0$. In Section \ref{Sec4} we finish the scheme for geometric integrability of equation \eqref{cauchy} and determine that we can eliminate the restriction $\Gamma=0$ in \eqref{cauchy} for it to describe pseudo-spherical surfaces.

\begin{theorem}\label{teo1.4}
    Equation \eqref{DGH} describes pseudo-spherical surfaces with associated triplet given by
    \begin{align*}
    \theta_1&=\ds{\left(m+b\right)dx-\left[(u+\Gamma)m +(b+1)u+b(\Gamma+1)\mp\eta u_x \right]}dt,\\
    \theta_2&=\ds{\eta\,dx-[\eta(1+u+\Gamma)\mp u_x]dt},\\
    \theta_3&=\ds{\pm\left(m+b+1\right)dx+\left[\eta u_x\mp (u+\Gamma)(m+1)\mp(u+1)(b+1)\mp\Gamma b\right]dt,}
    \end{align*}
where $\eta^2=2+2b+\al+\Gamma$. In particular, the DGH equation is geometrically integrable.
\end{theorem}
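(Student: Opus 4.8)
The plan is to verify directly that the prescribed triplet satisfies the structure equations of a pseudo-spherical surface. Recall that an equation for $u=u(t,x)$ describes pseudo-spherical surfaces if there exist one-forms $\theta_i = f_{i1}\,dx + f_{i2}\,dt$, $i=1,2,3$, with $f_{ij}$ functions of $u$ and its derivatives, that are linearly independent (i.e. $\theta_1\wedge\theta_2\neq 0$) and satisfy
\[
d\theta_1 = \theta_3\wedge\theta_2, \qquad d\theta_2 = \theta_1\wedge\theta_3, \qquad d\theta_3 = \theta_1\wedge\theta_2
\]
precisely when $u$ solves the equation. Reading off the coefficients $f_{ij}$ from the statement, I would first record the two elementary identities
\[
d\theta_i = \left(\p_x f_{i2} - \p_t f_{i1}\right)\,dx\wedge dt, \qquad \theta_i\wedge\theta_j = \left(f_{i1}f_{j2} - f_{i2}f_{j1}\right)\,dx\wedge dt,
\]
so that each structure equation reduces to a single scalar relation obtained by comparing coefficients of $dx\wedge dt$.

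Next I would carry out these computations, systematically eliminating higher-order terms via $m = u - u_{xx}$ (so $u_{xx}=u-m$) and, whenever a time derivative of $m$ appears, substituting the equation written in the form \eqref{gCH} with $\beta=\gamma=0$, namely $m_t = (\al+\Gamma)u_x - (u+\Gamma)m_x - 2u_x m$. The structure of the triplet suggests a natural division of labour. Since the $dx$-coefficient of $\theta_2$ is the constant $\eta$, the relation $d\theta_2 = \theta_1\wedge\theta_3$ contains no time derivative of $m$; it should collapse to a purely algebraic identity, and matching the constant and linear-in-$u$ terms is exactly what forces the constraint $\eta^2 = 2+2b+\al+\Gamma$. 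By contrast, $\theta_1$ and $\theta_3$ carry $m$ in their $dx$-coefficients, so $d\theta_1 = \theta_3\wedge\theta_2$ and $d\theta_3 = \theta_1\wedge\theta_2$ each produce $m_t$ and, after the substitution above, should reduce identically to \eqref{DGH}. Verifying that both collapse to the \emph{same} equation and to nothing more is what establishes the exact correspondence between solutions of DGH and pseudo-spherical surfaces.

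The main obstacle is bookkeeping rather than conceptual: one must collect the resulting polynomials in $u$, $u_x$ and $m$ and check term-by-term cancellation, the delicate point being the consistent propagation of the $\mp$ and $\pm$ signs across the three equations. I expect the two sign choices to correspond to the two branches $\eta=\pm\sqrt{2+2b+\al+\Gamma}$ (equivalently, to the orientation of the associated surface), so that both selections yield valid triplets. Finally, to confirm geometric integrability in the required sense, I would check nondegeneracy: computing $\theta_1\wedge\theta_2 = (f_{11}f_{22}-f_{12}f_{21})\,dx\wedge dt$ with $f_{21}=\eta\neq 0$ and $f_{11}=m+b$ shows the forms are generically linearly independent, so the surface is genuinely two-dimensional. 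Since the triplet is prescribed, the argument is a verification; one might add a remark that it originates from the zero-curvature ($\mathfrak{sl}(2,\R)$) representation underlying the known integrability of DGH, which explains why such a triplet exists at all.
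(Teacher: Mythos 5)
Your direct verification is sound and would establish the theorem, but it is a genuinely different route from the paper's. The paper \emph{derives} the triplet rather than merely checking it: starting from the CH triplet $\omega_i$ in \eqref{4.2.2}, already established in \cite{priigorjde} for $\beta=\gamma=\Gamma=0$, it applies the global diffeomorphism $F(t,x)=(t,x-\Gamma t)$, under which the CH equation with dispersion $\al$ is carried to the DGH equation \eqref{4.2.5} with parameters $(\al+\Gamma,\Gamma)$, and sets $\theta_i=F^\ast\omega_i$. Because pullback commutes with $d$ and $\wedge$, all three structure equations transfer automatically from the CH case, and the written proof only displays the single check $d\theta_3=\theta_1\wedge\theta_2$ to confirm that the compatibility condition is exactly \eqref{DGH}. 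This buys two things your computation does not: it explains where the triplet and the constraint $\eta^2=2+2b+\al+\Gamma$ come from (they are the CH data with $\al$ replaced by $\al+\Gamma$), and it motivates why one must use the shift $x\mapsto x-\Gamma t$ --- which fixes $\{0\}\times\R$ and hence preserves initial data and decay at infinity --- rather than the naive substitution $u\mapsto u+\Gamma$. Conversely, your argument is self-contained and does not lean on the earlier CH result. Two small cautions on your plan: in the paper the relation $\eta^2=2+2b+\al+\Gamma$ is invoked in the equation $d\theta_3=\theta_1\wedge\theta_2$, not necessarily in $d\theta_2=\theta_1\wedge\theta_3$ as you anticipate (this affects only where the bookkeeping lands, not correctness); and for the ``geometrically integrable'' conclusion you should record not just that $\theta_1\wedge\theta_2\neq0$ but that the triplet depends non-trivially on the free parameter $\eta$, which is what Definition \ref{def4.2} actually requires.
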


Although the DGH equation is very similar and shares several properties with the CH equation, we have not found any result proving that the DGH equation describes pseudo-spherical surfaces, although such result has been known for the CH equation for nearly two decades \cite{reyes2002}.

We also apply a perturbative method due to Dubrovin \cite{dubrovincpam} to investigate other integrability properties of equation \eqref{cauchy}. In fact, Dubrovin's work motivates us to propose the following weak notion of integrability:
\begin{definition}\label{new}
An equation is said to be quasi-integrable if, eventually under a change of variables, it satisfies lemmas \ref{lema4.1} and \ref{lema4.2}.
\end{definition}

Lemmas \ref{lema4.1} and \ref{lema4.2}, presented in Subsection \ref{bi-ha}, essentially show the existence of an infinite hierarchy of {\it approximate symmetries} \cite{ibragimov,dubrovincmp} and uniqueness of a bi-Hamiltonian deformation of hyperbolic equations. A key step to apply Dubrovin's machinery is to make a change of variables so that the higher order terms of \eqref{cauchy} can be seen as perturbation of a hyperbolic equation. In our case, we have 
\bb\label{4.1.5}
u_t-\epsilon^2u_{txx}-\left(\f{3}{2}uu_x+\al u_x+\be u^2u_x+\gamma u^3u_x\right)+\epsilon^2\left(u_xu_{xx}+\f{1}{2}uu_{xxx}-\Gamma u_{xxx}\right)=0.
\ee
Equation \eqref{4.1.5} can be transformed (see Section \ref{bi-ha}) into an evolution equation. We have the following result about integrability.
\begin{theorem}\label{teo1.5}
Equation \eqref{4.1.5} is quasi-integrable if and only if $\be=\gamma=0$. In particular, its perturbed Hamiltonian is
$$
{\cal H}=\int \left(\f{1}{4}v^3+\f{\al}{2}v^2-\epsilon^2\f{v+\al+\Gamma}{3}v_x^2+\epsilon^4\f{v+\al+\Gamma}{2}v_{xx}^4\right)dx,\quad v=(1-\epsilon^2\p_x^2)u.
$$
Moreover, equation \eqref{4.1.5} admits a unique bi-Hamiltonian structure given by the deformations
$$%\bb\label{4.1.4}
\{v(x),v(y)\}_1=\delta'(x-y),\quad \{v(x),v(y)\}_2=(v(x)+\al+\Gamma)\delta'(x-y)+\f{1}{2}v_x\delta(x-y).
$$
\end{theorem}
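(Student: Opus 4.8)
The plan is to place equation \eqref{4.1.5} into the perturbative framework underlying Lemmas \ref{lema4.1} and \ref{lema4.2} and then read off the obstruction to quasi-integrability as an explicit function of $\be$ and $\gamma$. First I would pass to the evolution variable $v=(1-\epsilon^2\p_x^2)u$: since $u_t-\epsilon^2u_{txx}=\p_t v$, equation \eqref{4.1.5} becomes $v_t=\frac{3}{2} uu_x+\al u_x+\be u^2u_x+\gamma u^3u_x-\epsilon^2(u_xu_{xx}+\frac{1}{2} uu_{xxx}-\Gamma u_{xxx})$, and substituting the Neumann expansion $u=(1-\epsilon^2\p_x^2)^{-1}v=v+\epsilon^2 v_{xx}+\cdots$ yields a formal evolution equation $v_t=\p_x\bigl(\frac{3}{4} v^2+\al v+\frac{\be}{3}v^3+\frac{\gamma}{4}v^4\bigr)+\epsilon^2(\cdots)+\cdots$ that is a dispersive deformation of a scalar hyperbolic conservation law. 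At order $\epsilon^0$ this leading flux is genuinely nonlinear, the flow is Hamiltonian for $\{v(x),v(y)\}_1=\delta'(x-y)$, and the compatible bracket of hydrodynamic type $\{v(x),v(y)\}_2=(v+\al+\Gamma)\delta'(x-y)+\frac{1}{2} v_x\delta(x-y)$ is fixed by demanding that it generate the same leading flow; this is exactly the setting on which Dubrovin's construction operates.

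For the sufficiency ($\be=\gamma=0$) I would argue that \eqref{4.1.5} is then the $\epsilon$-scaled Dullin--Gottwald--Holm equation, whose integrability is classical \cite{DGH1,DGH2}. Concretely, I would exhibit the functional ${\cal H}$ of the statement and verify by a direct variational computation that $v_t=\p_x\frac{\delta {\cal H}}{\delta v}=\{v,{\cal H}\}_1$ reproduces the transformed equation, and that the pencil $\{\cdot,\cdot\}_1+\lambda\{\cdot,\cdot\}_2$ satisfies the Jacobi identity for every $\lambda$, so that the two structures are compatible. This shows that the hypotheses of Lemmas \ref{lema4.1} and \ref{lema4.2} hold and hence that the equation is quasi-integrable, while uniqueness of the bi-Hamiltonian deformation is inherited directly from Lemma \ref{lema4.2}.

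The content of the theorem lies in the necessity. Here I would run the perturbative scheme of Lemma \ref{lema4.1}, positing a formal bi-Hamiltonian deformation compatible with the prescribed dispersive terms of \eqref{4.1.5} and solving the resulting determining equations order by order in $\epsilon$. Because the dispersionless bracket pencil $(\{\cdot,\cdot\}_1,\{\cdot,\cdot\}_2)$ exists for every $\be,\gamma$, no constraint is produced at leading order; the first genuine condition arises at the first dispersive order, where continuing the deformation, equivalently closing the approximate symmetry hierarchy of Lemma \ref{lema4.1}, requires a solvability identity in $v$. I expect the coefficients of this identity to be affine in $\be$ and $\gamma$ and to vanish identically only when $\be=\gamma=0$, which is precisely the case realised by the explicit structure in the statement.

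The hard part will be carrying out this order-by-order computation honestly: one must expand \eqref{4.1.5} in terms of $v$ using the Neumann series for $u$, track how the quartic and quintic flux terms $\frac{\be}{3}v^3$ and $\frac{\gamma}{4}v^4$ feed into the dispersive corrections, and extract from Lemma \ref{lema4.1} the precise obstruction. The delicate point is that, by Dubrovin's quasi-triviality, the deformation can always be trivialised by a (possibly nonlocal) Miura transformation, so the obstruction to genuine \emph{bi-Hamiltonicity} is finer than the mere existence of a single Hamiltonian; isolating exactly which combination of $\be$ and $\gamma$ it detects, and verifying that it is nonzero whenever $(\be,\gamma)\neq(0,0)$, is the crux of the argument and the step I expect to require the most care.
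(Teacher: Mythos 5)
Your overall strategy coincides with the paper's: rescale, pass to an evolution equation in a new dependent variable, view the $\epsilon^2$ and $\epsilon^4$ terms as a dispersive deformation of the hyperbolic law $v_t=f''(v)v_x$, and then match against Dubrovin's normal form from Lemmas \ref{lema4.1} and \ref{lema4.2} order by order. However, there are two concrete problems. First, your change of variables is not the one the method requires. You take $u=(1-\epsilon^2\p_x^2)^{-1}v=v+\epsilon^2v_{xx}+\cdots$, i.e.\ $v=m$; but in the variable $m$ the first Hamiltonian operator of this family is $\p_x(1-\epsilon^2\p_x^2)$, so the leading Poisson bracket would be $\delta'(x-y)-\epsilon^2\delta'''(x-y)$ rather than the unperturbed $\delta'(x-y)$ demanded by Lemma \ref{lema4.2}. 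The paper (following Dubrovin's treatment of Camassa--Holm) uses the symmetrising square root, $v=(1-\epsilon^2\p_x^2)^{1/2}u$, i.e.\ $u=v+\tfrac{\epsilon^2}{2}v_{xx}+\tfrac{3}{8}\epsilon^4v_{xxxx}+\cdots$, precisely so that the first structure is exactly $\p_x$ and the lemmas apply verbatim. (The statement's formula $v=(1-\epsilon^2\p_x^2)u$ is a typo of the paper; the expansions and the diagram in Section \ref{bi-ha} make the half-power explicit.) With your substitution the determining equations you would derive at order $\epsilon^2$ are not those of the normal form, and Lemma \ref{lema4.1} cannot be invoked directly.

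Second, and more importantly, the decisive step is missing. You correctly predict that the obstruction appears at the first dispersive order and is linear in $\beta$ and $\gamma$, but you defer the computation that produces it, and that computation \emph{is} the theorem. In the paper, after fixing $f(v)=\tfrac{v^3}{4}+\tfrac{\alpha}{2}v^2+\tfrac{\beta}{12}v^4+\tfrac{\gamma}{20}v^5$ at order $\epsilon^0$, the order-$\epsilon^2$ matching yields \emph{two} independent coefficient equations (from $v_{xx}$ and $v_x^2$) for the \emph{single} unknown function $c(v)$ in ${\cal H}_1=-\int\tfrac{c}{24}f'''v_x^2\,dx$; eliminating $c$ between them forces $\beta=\gamma=0$ and then $c(v)=8(v+\alpha+\Gamma)$. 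Note that this obstruction already lives at the level of the single-Hamiltonian normal form of Lemma \ref{lema4.1} (an overdetermined system for $c$), not at the finer level of bi-Hamiltonicity or quasi-triviality where you locate it; the bi-Hamiltonian data enter only afterwards, at order $\epsilon^4$, where $p=(v+\alpha+\Gamma)/3$, $s=0$ and relation \eqref{4.1.4} force $q$ to be affine, giving the second bracket of the statement via Lemma \ref{lema4.2}. Your proposed verification of the Jacobi identity for the whole pencil is unnecessary, since Lemma \ref{lema4.2} already supplies existence, compatibility and uniqueness once $c$, $p$, $q$ are determined. Until the order-$\epsilon^2$ elimination is actually carried out, the ``only if'' direction remains unproved.
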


\begin{corollary}
Equation \eqref{4.1.5} is quasi-integrable if and only if it is equivalent to the DGH equation.
\end{corollary}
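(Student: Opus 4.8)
The plan is to reduce the corollary to Theorem~\ref{teo1.5} by identifying precisely which member of the family \eqref{cauchy} the DGH equation \eqref{DGH} is, and then tracking this identification through the change of variables that produces \eqref{4.1.5}. The first observation is a matter of inspection: comparing \eqref{DGH} with \eqref{cauchy}, the DGH equation is exactly \eqref{cauchy} with $\beta=\gamma=0$, since the only terms distinguishing the two are the cubic and quartic nonlinearities $\beta u^2u_x$ and $\gamma u^3u_x$. Thus, at the level of the family \eqref{cauchy}, the locus ``$\beta=\gamma=0$'' and the locus ``coincides with the DGH equation'' are one and the same.

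Next I would verify that this correspondence is preserved by the transformation taking \eqref{cauchy} to its perturbative form \eqref{4.1.5}. Since \eqref{4.1.5} is obtained from \eqref{cauchy} by the invertible change of variables described in Subsection~\ref{bi-ha} --- rescaling to introduce the parameter $\epsilon$ and rewriting the equation in evolution form --- the constants $\alpha,\beta,\gamma,\Gamma$ appear in \eqref{4.1.5} in one-to-one correspondence with their appearance in \eqref{cauchy}. In particular, the terms $\beta u^2u_x$ and $\gamma u^3u_x$ of \eqref{4.1.5} vanish precisely when $\beta=\gamma=0$, so that \eqref{4.1.5} is equivalent to the DGH equation if and only if $\beta=\gamma=0$.

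With these two identifications in hand, the corollary is immediate. By Theorem~\ref{teo1.5}, equation \eqref{4.1.5} is quasi-integrable if and only if $\beta=\gamma=0$; by the preceding paragraph, $\beta=\gamma=0$ holds if and only if \eqref{4.1.5} is equivalent to the DGH equation. Chaining the two equivalences yields the claim. As a consistency check, the converse direction also agrees with the classical fact, recalled earlier in the excerpt, that \eqref{cauchy} is integrable when $\beta$ and $\gamma$ vanish, which in turn implies quasi-integrability in the sense of Definition~\ref{new}.

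The only genuinely delicate point is making rigorous the notion of \emph{equivalence} invoked in the statement. One must confirm that the change of variables linking \eqref{cauchy} and \eqref{4.1.5} is a genuine bijection on the relevant solution space and that it neither silently absorbs nor generates cubic or quartic nonlinearities --- in the spirit of the footnote warning that replacing $u$ by $u-\Gamma$ fails to preserve initial data. Once it is checked that the transformation is parameter-preserving in $\beta$ and $\gamma$, the remainder is purely a restatement of Theorem~\ref{teo1.5}, and no further analytic work is required.
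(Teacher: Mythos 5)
Your proposal is correct and follows exactly the route the paper intends: the corollary is left as an immediate consequence of Theorem \ref{teo1.5}, since setting $\beta=\gamma=0$ in \eqref{cauchy} (and hence in the parameter-preserving rescaled form \eqref{4.1.5}) is precisely what reduces the equation to the DGH equation \eqref{DGH}. Your extra care about the equivalence being parameter-preserving in $\beta$ and $\gamma$ is a reasonable precaution but adds nothing beyond what the paper tacitly assumes.
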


The paper is organized as follows: in Section \ref{Sec2} we present notation and prove Theorem \ref{teo1.1} with the help of auxiliary results that guarantee the existence of global solutions by providing lower bounds for the first spatial derivative of the local solution. Moreover, we also prove Theorem \ref{teo1.2} in Section \ref{Sec2}. In Section \ref{Sec3} we prove Theorem \ref{teo1.3} by obtaining conditions for lower unboundedness of $u_x$ and also by introducing a real parameter that enables us to reduce the criteria to one similar to the CH equation. In Section \ref{Sec4} we apply the machinery of Dubrovin \cite{dubrovincmp} to prove that equation \eqref{cauchy} is quasi-integrable if (and only if) it is reduced to the DGH equation. Furthermore, in Section \ref{Sec4} we complete the scheme of geometric integrability of \eqref{cauchy} by showing that the DGH equation describes pseudo-spherical surfaces. Finally, in Section \ref{Sec5} we present a discussion of our achievements and how they impact some recent results found in the literature.

\section{Global well-posedness}\label{Sec2}

In this section we will prove Theorem \ref{teo1.1}. More explicitly,  we will go through propositions that use the local solution of \eqref{cauchy} and some auxiliary functions and estimates to develop the conditions to extend the solution to a global one in $H^2(\R)$. Then we will use a common density argument to conclude Theorem \ref{teo1.1}. But before proceeding with the results, we will present some definitions of spaces and technicalities required for the complete understanding of what follows.

Let $L^2(\R)$ denote the space of real square integrable functions $f:\R\rightarrow \R$. It is a Hilbert space when endowed with the inner product
\begin{align*}
\langle f\,,\, g\rangle = \int_{\R} f(x)g(x)dx.
\end{align*}
%Broadly speaking, given $p\in [1,\infty)$ we define the Banach space $L^p(\R)$ as the space of functions $f$ such that
%\begin{align*}
%\Vert f\Vert_{L^p(\R)}:= \left(\int_{\R}|f(x)|^pdx \right)^{1/p}<\infty.
%\end{align*}
%If we allow $p=\infty$, 
Moreover, we define the Banach space $L^{\infty}(\R)$ as the set of functions $f$ such that
\begin{align*}
\Vert f\Vert_{L^{\infty}(\R)} :=\text{esssup}\vert f(x)\vert<\infty.
\end{align*}
Let $C^{\infty}_0(\R)$ be the set of smooth functions with compact support and $S(\R)$ be the set of smooth functions decaying to $0$ at infinity, with the same property being held by any of their derivatives. Then $C^{\infty}_0(\R)$ is dense in $S(\R)$. The space $S(\R)$ is called Schwarz space and its elements are called \textit{test functions}.

The elements of the dual space $S'(\R)$ are called \textit{tempered distributions} and, given $\phi\in S'(\R)$, we define its Fourier transform by
\begin{align*}
{\cal F}(\phi)(\xi)=\hat{\phi}(\xi) := \frac{1}{\sqrt{2\pi}}\int_{\R}\phi(x)e^{-ix\xi}dx,
\end{align*}
with its inverse being given by
\begin{align*}
\phi(x) = {\cal F}^{-1}(\hat{\phi})(x)= \frac{1}{\sqrt{2\pi}}\int_{\R}\hat{\phi}(x)e^{ix\xi}d\xi.
\end{align*}
Given $s\in\R$, the set of tempered distributions $f\in S'(\R)$ such that $(1+|\xi|^2)^{s/2}\hat{f}(\xi)\in L^2(\R)$, denoted by $H^s(\R)$ and called Sobolev space, is a Hilbert space when endowed with the inner product of $L^2(\R)$. The following embedding properties of $H^s(\R)$ will be constantly used throughout this paper (see Chapter 4 in \cite{taylor}):
\begin{itemize}
\item If $s\geq t$, then $S(\R)\subset H^s(\R)\subset H^t(\R) \subset S'(\R)$.
\item If $s>1/2$, then $H^s(\R)\subset L^{\infty}(\R)$.
\item For each $s\in \R$, let $\Lambda^su={\cal F}^{-1}((1+|\xi|^2)^{s/2}\hat{u})$. Then for all $s$ and $t$, $\Lambda^s:H^t\rightarrow H^{t-s}$ is an isomorphism and its inverse is denoted by $\Lambda^{-s}$. In particular we have $H^s(\R) = \Lambda^{-s}(L^2(\R))$ and $\Vert u \Vert_{H^s(\R)} = \Vert \Lambda^s u\Vert_{L^2(\R)}.$
\end{itemize}

Regarding the isomorphism $\Lambda^s$, we mention that the most important choice in this paper is $s=2$. In particular, $\Lambda^2$ can be identified with the Helmholtz operator $1-\partial_x^2$, while its inverse is given by $\Lambda^{-2}f = \frac{e^{-\vert x\vert}}{2} \ast f$, where $\ast$ denotes convolution operation.

Given an interval $I=[0,a)$, with $a>0$, let $f$ be a continuous function on $I$ with continuous first derivative in the interior of $I$ and let $k\in\R$ a constant. If $f'(t)\leq kf(t)$, then
\begin{align}\label{2.1}
f(t) \leq f(0)e^{kt},
\end{align}
for all $t\in[0,a)$. The expression \eqref{2.1} is known as Gronwall's inequality and the proof of a more general inequality can be found in \cite{hunter}, page 56.

One important result proven in \cite{priigorjde} is that if any solution of \eqref{cauchy} and its spatial derivatives vanish at infinity, then its $H^1$-norm is conserved and we have
\begin{align}\label{conservation}
    \Vert u \Vert_{H^1} = \Vert u_0\Vert_{H^1}
\end{align}
for $u_0=u_0(x):=u(0,x)$.

The following lemma will be important for the results of this section.

\begin{lemma}\label{lem2.1}
Let $u$ be a solution of \eqref{cauchy} such that $u_{tx}=u_{xt}$ and $u,u_x,u_{xx}$ are integrable and vanishing at infinity. If $m_0:=u_0-u_0''$, then
\begin{align*}
\int_{\R}m_0dx = \int_{\R} mdx = \int_{\R}udx= \int_{\R}u_0dx.
\end{align*}
\end{lemma}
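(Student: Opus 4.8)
The plan is to split the chain of equalities into a \emph{spatial} part, valid at each fixed time, and a \emph{temporal} part expressing conservation in $t$. The spatial part is immediate: writing $m = u - u_{xx}$ and integrating over $\R$ gives $\int_\R m\,dx = \int_\R u\,dx - \int_\R u_{xx}\,dx = \int_\R u\,dx - [u_x]_{-\infty}^{+\infty} = \int_\R u\,dx$, where the boundary term drops because $u_x$ is integrable and vanishes at infinity. Applying the same computation to $m_0 = u_0 - u_0''$ yields $\int_\R m_0\,dx = \int_\R u_0\,dx$. Thus it only remains to prove that $t \mapsto \int_\R m\,dx$ is constant, since then $\int_\R m\,dx = \int_\R m_0\,dx$ and the four integrals coincide.

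For the temporal part, I would differentiate under the integral sign and use the equation to replace $m_t$. Integrating \eqref{cauchy} over $\R$,
$$\frac{d}{dt}\int_\R m\,dx = \int_\R m_t\,dx = -\int_\R(um_x + 2u_x m)\,dx + \int_\R(\alpha u_x + \beta u^2 u_x + \gamma u^3 u_x + \Gamma u_{xxx})\,dx.$$
Every term on the right is a total $x$-derivative or reduces to one. The forcing terms integrate to zero because $\alpha u_x$, $\beta u^2 u_x = \tfrac{\beta}{3}\partial_x(u^3)$, $\gamma u^3 u_x = \tfrac{\gamma}{4}\partial_x(u^4)$ and $\Gamma u_{xxx} = \Gamma\partial_x(u_{xx})$ all have vanishing boundary contributions. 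For the transport terms I would use $um_x + 2u_x m = \partial_x(um) + u_x m$, so that $\int_\R(um_x + 2u_x m)\,dx = \int_\R u_x m\,dx$; substituting $m = u - u_{xx}$ then gives $\int_\R u_x m\,dx = \tfrac{1}{2}\int_\R \partial_x(u^2)\,dx - \tfrac{1}{2}\int_\R \partial_x(u_x^2)\,dx = 0$. Hence $\frac{d}{dt}\int_\R m\,dx = 0$, which closes the argument.

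The manipulations above are all integration by parts; the point requiring care is the \emph{justification} rather than the algebra. The hypotheses that $u, u_x, u_{xx}$ be integrable and vanish at infinity are exactly what is needed to discard every boundary term, and the assumption $u_{tx} = u_{xt}$ (together with enough integrability of the time derivatives) is what legitimizes interchanging $\frac{d}{dt}$ with $\int_\R$ and writing $\int_\R u_{xxt}\,dx = [u_{xt}]_{-\infty}^{+\infty} = 0$. The only genuinely structural input is the cancellation $\int_\R u_x m\,dx = 0$, which reflects that $m = (1-\partial_x^2)u$ and is the mechanism by which the nonlinear transport part of \eqref{cauchy} fails to contribute to the evolution of $\int_\R m\,dx$; I would single this out as the crux of the computation.
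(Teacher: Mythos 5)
Your proof is correct, and every integration by parts is justified by the stated hypotheses. The paper itself gives no argument here --- it simply cites Theorem 4.2 and its Corollary of the earlier work \cite{priigorjde} --- so you have supplied the self-contained computation that underlies that citation: the identity $\int_\R m\,dx=\int_\R u\,dx$ from $\int_\R u_{xx}\,dx=0$, the rewriting $um_x+2u_xm=\partial_x(um)+u_xm$, and the cancellation $\int_\R u_xm\,dx=\tfrac12\int_\R\partial_x\bigl(u^2-u_x^2\bigr)dx=0$, which together show $\tfrac{d}{dt}\int_\R m\,dx=0$. This is exactly the standard conservation-law argument one expects behind the reference, so there is nothing to fault; your closing remark correctly identifies where the hypotheses $u_{tx}=u_{xt}$ and the decay of $u,u_x,u_{xx}$ enter.
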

\begin{proof}
See \cite{priigorjde}, Theorem 4.2 and its Corollary.
\end{proof}

%\begin{lemma}
%Equation \eqref{cauchy} conserves energy.
%\end{lemma}

In the next result we prove existence and uniqueness of solutions for the auxiliary Cauchy problem \begin{align}\label{aux}
    \begin{aligned}
        \begin{cases}
        \ds{\f{d}{dt}q(t,x) = u(t,q) + \Gamma},\\
        \\
        q(0,x)=x.
        \end{cases}
    \end{aligned}
\end{align}
that depends on the solution of the Cauchy problem of  \eqref{cauchy}. For the propositions that follow, we will consider $s=3$ and use local well-posedness of \eqref{cauchy} in $H^2(\R)$.

\begin{proposition}\label{prop2.1}
Given $u_0\in H^3(\R)$, let $u\in C^1([0,T),H^2(\R))$ be the solution of \eqref{cauchy}. Then the initial value problem \eqref{aux} has a unique solution $q(t,x)$ and $q_x(t,x)>0$ for any $(t,x)\in[0,T)\times \R$. Moreover, for each $t\geq 0$ fixed, $q(t,\cdot)$ is a diffeomorphism on the line.
\end{proposition}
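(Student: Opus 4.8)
The plan is to prove this via the classical method of characteristics, treating \eqref{aux} as an ODE in $t$ whose right-hand side depends on the solution $u$ of the PDE \eqref{cauchy}. First I would establish local existence and uniqueness of $q(t,x)$ for fixed $x$. Since $u \in C^1([0,T),H^2(\R))$ and $H^2(\R) \subset C^1_b(\R)$ by the Sobolev embedding $H^s \subset L^\infty$ for $s>1/2$ (applied to $u$ and $u_x$), the map $(t,y) \mapsto u(t,y) + \Gamma$ is continuous in $t$ and Lipschitz in $y$ with a Lipschitz constant controlled by $\sup_t \Vert u_x(t,\cdot)\Vert_{L^\infty}$ on compact time subintervals. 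The Picard--Lindel\"of theorem then yields a unique $C^1$ solution $q(t,x)$, and continuous dependence on the parameter $x$ gives smoothness of $q$ in $x$; in fact since $u \in H^2$ one expects $q(t,\cdot) \in C^1$.

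The key step is to show $q_x(t,x) > 0$ for all $(t,x) \in [0,T)\times\R$. I would differentiate the characteristic equation in $x$. Writing $q_x := \partial_x q$, differentiating $q_t = u(t,q)+\Gamma$ gives the linear variational equation
\begin{align*}
\frac{d}{dt} q_x(t,x) = u_x(t,q(t,x))\, q_x(t,x), \qquad q_x(0,x) = 1,
\end{align*}
which integrates explicitly to
\begin{align*}
q_x(t,x) = \exp\left( \int_0^t u_x(s,q(s,x))\, ds \right).
\end{align*}
Since the integrand is continuous and finite on $[0,t]$ (again using the embedding to bound $u_x$ in $L^\infty$), the exponential is strictly positive and finite for every $t \in [0,T)$. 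This immediately gives $q_x(t,x) > 0$.

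Finally, the diffeomorphism claim follows from the positivity of $q_x$: for each fixed $t$, $x \mapsto q(t,x)$ is a $C^1$ map with strictly positive derivative, hence strictly increasing and locally invertible by the inverse function theorem. To upgrade this to a global diffeomorphism on $\R$ I would verify that $q(t,\cdot)$ is surjective, which follows from showing $q(t,x) \to \pm\infty$ as $x \to \pm\infty$; this uses the fact that $u(t,\cdot) \in H^2(\R)$ decays at infinity, so $q(t,x) - x$ stays bounded (it is controlled by the time integral of $\Vert u(\cdot) + \Gamma\Vert_{L^\infty}$ uniformly in $x$), forcing the correct behaviour at infinity. Combining injectivity, surjectivity, and the $C^1$ inverse yields that $q(t,\cdot)$ is a diffeomorphism of the line.

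The main obstacle I anticipate is justifying the interchange of differentiation in $x$ with the ODE flow — that is, rigorously establishing that $q_x$ exists, is continuous, and satisfies the variational equation above. This requires the regularity $u \in C^1([0,T),H^2(\R))$ to control not just $u$ but $u_x$ uniformly on compact time intervals, and the differentiability-in-parameters theory for ODEs. Everything else reduces to Gronwall-type estimates \eqref{2.1} and the Sobolev embeddings already recorded in the excerpt, so the positivity and diffeomorphism conclusions are then routine consequences.
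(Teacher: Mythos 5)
Your proposal follows essentially the same route as the paper: existence and uniqueness of $q$ from the regularity $u\in C^1([0,T),H^2(\R))\subset C^1([0,T)\times\R)$ via classical ODE theory, positivity of $q_x$ from the explicit exponential solution of the variational equation, and the diffeomorphism property from monotonicity plus behaviour at infinity. The only difference is that the paper delegates the diffeomorphism claim to Theorem 3.1 of Constantin's work \cite{const2000-1}, whereas you sketch the standard argument (injectivity from $q_x>0$, surjectivity from boundedness of $q(t,x)-x$) directly; both are fine.
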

\begin{proof}
Since $u\in C^1([0,T),H^2(\R))$ and $H^2(\R)\subset C^1(\R)$, we have $u\in C^1([0,T)\times \R; \R)$, and by means of traditional existence theorems we guarantee the existence of a unique solution $q(t,x)$.

Differentiation of \eqref{aux} with respect to $x$ yields
\begin{align}\label{aux1}
\begin{aligned}
\begin{cases}
\ds{\frac{d}{d t}q_x(t,x)=u_x(t,q)q_x(t,x)},\\
\\
q_x(0,x)=1,
\end{cases}
\end{aligned}
\end{align}
where $u_x(t,q)$ denotes partial derivative with respect to the second variable. The solution of \eqref{aux1} is then explicitly given by
\begin{align*}
q_x(t,x) = \exp \left(\int_0^t u_x(s,q(s,x))ds\right)>0.
\end{align*}
The proof that $q(t,\cdot)$ is a diffeomorphism can be found in \cite{const2000-1} (Theorem 3.1).
\end{proof}

We shall now consider equation \eqref{cauchy} written in the form \eqref{gCH} with the function $h(u)$ given by \eqref{h}.

%\begin{align}\label{rew}
%m_t + (u+\Gamma)m_x + 2u_xm = \partial_x h(u),
%\end{align}
%where $h(u)$ is given by
%\begin{align}\label{h}
%h(u) = (\Gamma+\alpha)u + \frac{\beta}{3} u^3 + \frac{\gamma}{4}u^4.
%\end{align}

\begin{proposition}\label{prop2.2}
Given $u_0\in H^3(\R)$, let $u\in C^1([0,T),H^2(\R))$ be the solution of \eqref{cauchy} and $m_0=u_0-u_0''$. Then
\begin{align*}
mq_x^2 = m_0+\int_0^t q_x^2(s,x)\partial_x h(u(s,q))ds,
\end{align*}
where $q$ is the solution of \eqref{aux} and $h(u)$ is given by \eqref{h}.
\end{proposition}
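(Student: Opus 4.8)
The plan is to establish the identity $m q_x^2 = m_0 + \int_0^t q_x^2(s,x)\,\partial_x h(u(s,q))\,ds$ by showing that the quantity $m q_x^2$ evolves along the characteristics $q(t,x)$ precisely according to the forcing term $\partial_x h(u)$, and then integrating this evolution equation in time. First I would compute the total time derivative of $m(t,q(t,x))\,q_x(t,x)^2$ along the flow, using the chain rule and the fact that $\frac{d}{dt}q = u(t,q)+\Gamma$ from \eqref{aux}. The key algebraic ingredient is that, by the product rule,
\begin{align*}
\frac{d}{dt}\bigl(m q_x^2\bigr) = \left(m_t + (u+\Gamma)m_x\right)q_x^2 + 2 m\, q_x \frac{d}{dt}q_x,
\end{align*}
where $m_t$ and $m_x$ are evaluated at $(t,q(t,x))$ and the chain rule produces the convective term $(u+\Gamma)m_x$ because $\frac{d}{dt}q = u+\Gamma$.

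Next I would substitute the evolution equation for $q_x$ obtained in \eqref{aux1}, namely $\frac{d}{dt}q_x = u_x(t,q)\,q_x$, so that the second term becomes $2 m\, u_x\, q_x^2$. Combining the two contributions gives
\begin{align*}
\frac{d}{dt}\bigl(m q_x^2\bigr) = \left(m_t + (u+\Gamma)m_x + 2 u_x m\right)q_x^2.
\end{align*}
Here the bracketed expression is exactly the left-hand side of equation \eqref{gCH}, the form of \eqref{cauchy} recalled in the excerpt with $h(u)$ as in \eqref{h}. Invoking \eqref{gCH} I would replace this bracket by $\partial_x h(u)$, evaluated along the characteristic at $(t,q(t,x))$, yielding the clean ordinary differential equation $\frac{d}{dt}\bigl(m q_x^2\bigr) = q_x^2\,\partial_x h(u(t,q))$.

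Finally, I would integrate this identity from $0$ to $t$, using the initial conditions $q(0,x)=x$ and $q_x(0,x)=1$ from \eqref{aux} and \eqref{aux1}, so that the value at time zero is $m(0,x) = m_0(x)$. This produces the stated formula directly. The regularity hypotheses $u_0\in H^3(\R)$ and $u\in C^1([0,T),H^2(\R))$, together with the embedding $H^2(\R)\subset C^1(\R)$, ensure that $m=u-u_{xx}$ and $\partial_x h(u)$ are continuous in $(t,x)$, so that all derivatives and the interchange $u_{tx}=u_{xt}$ are justified and the fundamental theorem of calculus applies. The main obstacle I anticipate is the bookkeeping of partial versus total derivatives along the characteristics: one must be careful that $m_x(t,q)$ denotes the spatial partial derivative evaluated at the moving point $q(t,x)$, and that the convective combination $m_t + (u+\Gamma)m_x$ emerging from $\frac{d}{dt}\bigl(m(t,q(t,x))\bigr)$ matches exactly the transport operator appearing in \eqref{gCH}. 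Once this identification is made transparent, the remainder is a routine integration.
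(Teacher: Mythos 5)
Your proposal is correct and follows essentially the same route as the paper: differentiate $m(t,q(t,x))\,q_x^2$ in time, use $\frac{d}{dt}q = u+\Gamma$ and $\frac{d}{dt}q_x = u_x q_x$ to recognise the transport operator of \eqref{gCH}, replace it by $\partial_x h(u)$, and integrate from $0$ to $t$ with $q_x(0,x)=1$. No substantive differences to report.
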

\begin{proof}
Differentiating the term $mq_x^2$ with respect to $t$, we obtain
\begin{align*}
\frac{d}{dt} (mq_x^2) &= m_tq_x^2 + m_xq_tq_x^2 + 2mq_xq_{tx}\\
&=(\partial_xh(u) - (u+\Gamma)m_x-2u_xm)q_x^2 + 2u_xmq_x^2 + (u+\Gamma)m_xq_x^2 \\
&= (\partial_xh(u))q_x^2.
\end{align*}
Integrating form $0$ to $t$ yields
\begin{align*}
mq_x^2 = m_0 + \int_0^t q_x^2(s,x)\partial_xh(u(s,x))ds
\end{align*}
and the result is proven.
\end{proof}

Before proceeding with the next proposition we need a technical result.

\begin{lemma}\label{lem2.2a}
Let $F$ and $f$ be functions such that $F\in C^\infty(\R)$, $F(0)=0$, and $f\in H^{s}(\R)$, with $s>1/2$. If, for some $t>1/2$, the $H^t(\R)$ norm of $f$ is bounded from above by $R>0$, that is $\|f\|_{H^t(\R)}<R$, then there exists a positive constant $c$ depending only on $R$ such that $F(f)\in H^s(\R)$ and $\|F(f)\|_{H^s(\R)}\leq c\|f\|_{H^s(\R)}$.
\end{lemma}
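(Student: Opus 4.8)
The plan is to establish the classical Moser-type composition estimate, the decisive feature being that the multiplicative constant must depend on $f$ only through a bound for a \emph{lower} norm, so that the dependence on $\|f\|_{H^s}$ stays linear. First I would use the embedding $H^t(\R)\hookrightarrow L^\infty(\R)$, valid since $t>1/2$, to convert the hypothesis $\|f\|_{H^t}<R$ into a pointwise bound $\|f\|_{L^\infty}\le \kappa R=:M$, where $\kappa$ is the embedding constant. The whole problem then reduces to proving an estimate of the form $\|F(f)\|_{H^s}\le C(M)\,\|f\|_{H^s}$ with $C$ nondecreasing in $M$; setting $c:=C(\kappa R)$ yields a constant depending only on $R$ (with $F$, $s$, $t$ fixed).

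Next, the hypothesis $F(0)=0$ is precisely what makes the lowest-order ($L^2$) contribution linear. By Hadamard's lemma I would write $F(y)=y\,G(y)$ with $G(y)=\int_0^1 F'(\tau y)\,d\tau\in C^\infty(\R)$, so that $F(f)=f\,G(f)$ and $\|F(f)\|_{L^2}\le \|G(f)\|_{L^\infty}\|f\|_{L^2}$. Since $|G(y)|\le \sup_{|z|\le M}|F'(z)|$ for $|y|\le M$, the factor $\|G(f)\|_{L^\infty}$ is bounded by a constant $C(M)$, which controls the $L^2$ part linearly in $\|f\|_{L^2}$.

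For the derivatives, in the case $s=n\in\mathbb{N}$ I would differentiate via Fa\`a di Bruno's formula, expressing $\partial_x^n F(f)$ as a finite sum of terms $F^{(j)}(f)\prod_{i=1}^{j}\partial_x^{a_i}f$ with each $a_i\ge 1$ and $\sum_i a_i=n$. Each factor $F^{(j)}(f)$ is bounded in $L^\infty$ by $\sup_{|z|\le M}|F^{(j)}(z)|\le C(M)$, and each product $\prod_i\partial_x^{a_i}f$ is estimated in $L^2$ through the Gagliardo--Nirenberg interpolation inequalities (applying H\"older after $\|\partial_x^{a_i}f\|_{L^{2n/a_i}}\lesssim \|f\|_{L^\infty}^{1-a_i/n}\|f\|_{H^n}^{a_i/n}$), yielding a bound $\le C\,\|f\|_{L^\infty}^{\,j-1}\|f\|_{H^n}\le C(M)\|f\|_{H^n}$. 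Summing the finitely many terms and combining with the previous step gives $\|F(f)\|_{H^n}\le C(M)\|f\|_{H^n}$ for all integer exponents, and the membership $F(f)\in H^n$ follows from finiteness of the right-hand side.

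Finally, the main obstacle is the passage to non-integer $s$: since $f\mapsto F(f)$ is genuinely nonlinear, one cannot simply interpolate the linear-looking estimate between two integer levels. I would therefore resort to a Littlewood--Paley/paraproduct decomposition, linearising $F(f)$ via Bony's formula and estimating the remainder by the same $L^\infty$ bounds on the derivatives of $F$; alternatively, since this is exactly the standard Moser estimate, I would invoke the corresponding statement from a standard reference on nonlinear estimates in Sobolev spaces (e.g. Taylor \cite{taylor}). Either route produces $\|F(f)\|_{H^s}\le C(M)\|f\|_{H^s}$ for all real $s>1/2$, and tracking $M=\kappa R$ through the constants delivers the required $c=c(R)$, completing the proof.
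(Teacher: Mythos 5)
Your proposal is correct and, for its first half, coincides with the paper's own argument: both use the embedding $H^t(\R)\hookrightarrow L^\infty(\R)$ to turn $\|f\|_{H^t}<R$ into a pointwise bound $\|f\|_{L^\infty}\le c_1R$, and both use the Hadamard factorisation $F(f(x))=f(x)\int_0^1F'(\tau f(x))\,d\tau$ to obtain $|F(f(x))|\le c|f(x)|$ with $c=\sup_{|y|\le c_1R}|F'(y)|$. The routes diverge after that. The paper handles membership $F(f)\in H^s(\R)$ by citing Lemma 3 of Constantin--Molinet and then passes from the pointwise bound directly to the norm inequality with the single sentence that the bound ``remains valid for the corresponding norms'' --- a step that is literally justified only for the $L^2$ part of the norm, since a pointwise domination $|F(f)|\le c|f|$ says nothing about the derivatives of $F(f)$ versus those of $f$. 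You, by contrast, actually prove the full Moser-type estimate: Fa\`a di Bruno plus Gagliardo--Nirenberg interpolation for integer $s$ (correctly exploiting that each factor $F^{(j)}(f)$ is controlled in $L^\infty$ by a constant depending only on $M=c_1R$, so the estimate stays linear in $\|f\|_{H^s}$), and a paraproduct decomposition or an appeal to the standard composition theorem for fractional $s$. Your version is longer but self-contained and closes the gap in the derivative terms that the paper's one-line conclusion leaves open; the paper's version is shorter but leans on an external lemma for membership and on an assertion for the norm bound. Both deliver the same constant $c=c(R)$ with the crucial feature that it depends on $f$ only through the lower-order bound $R$.
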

\begin{proof}
The proof that $F(f)$ belongs to $H^s(\R)$ follows Lemma 3 of \cite{const-mol}. Hence, it remains to prove the existence of $c>0$ such that $\|F(f)\|_{H^s(\R)}\leq c\|f\|_{H^s(\R)}$.

Since $t>1/2$, again by the Sobolev Embedding Theorem we have the inequality $\|f\|_{L^\infty(\R)}\leq c_1\|f\|_{H^t(\R)}\leq c_1\,R$, for some positive constant $c_1$, depending only on $t$, but not uppon $f$. Moreover, we have
$$
F(f(x))=\int_0^1f(x)F'(tf(x))dt\Rightarrow |F(f(x))|\leq |f(x)|\sup_{|y|\leq c_1 R}|F'(y)|=:c|f(x)|.
$$

Due to the fact that $|F(f)|$ is bounded from above by $c|f(x)|$, consequently the same remains valid to their corresponding norms.
\end{proof}

\begin{corollary}\label{cor2.1}
Let $u_0\in H^s(\R)$, $s>3/2$, and $u$ be the corresponding (local) solution of \eqref{cauchy} with initial condition $u(0,x)=u_0(x)$. Then there exists a constant $c=c(\|u_0\|_{H^1(\R)})>0$ such that $\|h(u)\|_{H^s(\R)}\leq c\|u\|_{H^s(\R)}$, where $h$ is the function given by \eqref{h}.
\end{corollary}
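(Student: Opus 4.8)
The plan is to recognise this corollary as a direct specialisation of Lemma \ref{lem2.2a} to the concrete nonlinearity $h$, the only extra ingredient being a uniform-in-time control of a low-order Sobolev norm of $u$. First I would verify that $h$ from \eqref{h} meets the hypotheses imposed on $F$ in Lemma \ref{lem2.2a}: since $h(u)=(\alpha+\Gamma)u+\tfrac{\beta}{3}u^3+\tfrac{\gamma}{4}u^4$ is a polynomial with no constant term, it is smooth on $\R$ and satisfies $h(0)=0$, so it is admissible as the function $F$ in the lemma.

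Next I would supply the upper bound $R$ required by the lemma. The natural choice is to take $t=1>1/2$ and exploit the conservation of the $H^1$ norm recorded in \eqref{conservation}: because $u_0\in H^s(\R)$ with $s>3/2$, the Sobolev embedding guarantees that $u$ and its spatial derivatives vanish at infinity, so the hypotheses behind \eqref{conservation} are satisfied and we get $\|u(t,\cdot)\|_{H^1(\R)}=\|u_0\|_{H^1(\R)}$ for every $t$ in the lifespan. I would then fix any $R>\|u_0\|_{H^1(\R)}$, so that the strict inequality $\|u(t,\cdot)\|_{H^1(\R)}<R$ demanded by the lemma holds uniformly in $t$.

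With $F=h$ and $f=u(t,\cdot)$, Lemma \ref{lem2.2a} then yields, at each fixed time, a constant $c$ depending only on $R$ — hence only on $\|u_0\|_{H^1(\R)}$ — for which $h(u)\in H^s(\R)$ and $\|h(u)\|_{H^s(\R)}\leq c\|u\|_{H^s(\R)}$, which is precisely the assertion of the corollary.

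The argument is essentially mechanical, and the only point I expect to require care is the justification that the $H^1$-conservation \eqref{conservation} is genuinely available here, namely confirming that the decay-at-infinity conditions underlying it are inherited by the local solution, together with the harmless passage from the strict inequality in the hypothesis of Lemma \ref{lem2.2a} to the conserved value $\|u_0\|_{H^1(\R)}$ by enlarging $R$. Both are routine, so no further estimation is needed to close the proof.
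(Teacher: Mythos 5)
Your proposal is correct and follows essentially the same route as the paper: verify that $h$ is smooth with $h(0)=0$, invoke the conservation of the $H^1$ norm \eqref{conservation}, and apply Lemma \ref{lem2.2a} with $t=1$ and $R$ controlled by $\|u_0\|_{H^1(\R)}$. Your slight enlargement of $R$ to secure the strict inequality is a harmless refinement of the paper's choice $R=\|u_0\|_{H^1(\R)}$.
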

\begin{proof}
We firstly observe that both $h$ and $u$ satisfy the conditions required by Lemma \ref{lem2.2a} in view of the results proved in \cite{priigorjde} and the Proposition on page 1065 of \cite{const-mol}. In particular, we have $\|u\|_{H^1(\R)}=\|u_0\|_{H^1(\R)}$. The result then follows from Lemma \ref{lem2.2a}, with $t=1$ and $R=\|u_0\|_{H^1(\R)}$.
\end{proof}

The next proposition gives a condition for existence of global solutions.

\begin{proposition}\label{prop2.3}
Given $u_0\in H^3(\R)$, let $u$ be the solution of \eqref{cauchy}. If $u,u_x,u_{xx},u_{xxx}$ vanish at infinity and there exists $\kappa>0$ such that $u_x>-\kappa$, then there is a differentiable function $\sigma(t)$ such that $\Vert u\Vert_{H^3}\leq \sigma \Vert m_0\Vert_{H^1}$. In particular, $u$ does not blow up at a finite time.
\end{proposition}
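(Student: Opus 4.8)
The plan is to reduce everything to an a priori bound on $\|m\|_{H^1(\R)}$ and then close a differential inequality. Since $m=(1-\p_x^2)u=\Lambda^2 u$, the Fourier side gives the exact identities $\|u\|_{H^3}=\|\Lambda^3 u\|_{L^2}=\|\Lambda m\|_{L^2}=\|m\|_{H^1}$, as well as $\|u\|_{H^2}=\|m\|_{L^2}$ and $\|u\|_{H^1}=\|\Lambda^{-1}m\|_{L^2}\le\|m\|_{L^2}\le\|m\|_{H^1}$. Hence it suffices to control $y(t):=\|m\|_{L^2}^2+\|m_x\|_{L^2}^2=\|m\|_{H^1}^2$, and the target is a differential inequality of the form $y'\le C_1 y+C_2\sqrt{y}$ with constants depending only on $\kappa$ and $\|u_0\|_{H^1}$. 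Gronwall's inequality \eqref{2.1} then yields a differentiable $\sigma(t)$ with $\|u\|_{H^3}=\|m\|_{H^1}\le\sigma(t)\|m_0\|_{H^1}$, finite on every bounded time interval, which rules out finite-time blow-up.

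First I would write the equation in the form \eqref{gCH}, so that $m_t=-(u+\Gamma)m_x-2u_xm+\p_x h(u)$, and compute the evolution of the two pieces of $y$ (justified by $u\in H^3$, i.e. $m\in H^1$). For $\tfrac12\tfrac{d}{dt}\|m\|_{L^2}^2=\int m\,m_t\,dx$, integrating the transport term by parts (using $\p_x(u+\Gamma)=u_x$) collapses the first two contributions into $-\tfrac32\int u_x m^2\,dx$, leaving $\int m\,\p_x h(u)\,dx$. Differentiating once more gives $m_{xt}=-(u+\Gamma)m_{xx}-3u_xm_x-2u_{xx}m+\p_x^2 h(u)$, and in $\tfrac12\tfrac{d}{dt}\|m_x\|_{L^2}^2=\int m_x m_{xt}\,dx$ the transport and damping terms combine to $-\tfrac52\int u_xm_x^2\,dx$, leaving $-2\int u_{xx}m\,m_x\,dx$ and $\int m_x\,\p_x^2h(u)\,dx$. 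The two genuinely sign-definite terms are where the hypothesis $u_x>-\kappa$ enters: since $m^2,m_x^2\ge0$, we get $-\tfrac32\int u_xm^2\le\tfrac32\kappa\|m\|_{L^2}^2$ and $-\tfrac52\int u_xm_x^2\le\tfrac52\kappa\|m_x\|_{L^2}^2$.

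The delicate term is $-2\int u_{xx}m\,m_x\,dx$, since $u_{xx}$ is not controlled in $L^\infty$ and a direct estimate would produce a cubic term $\|m\|_{H^1}^3$, hence only a Riccati-type inequality allowing blow-up. Instead I would integrate by parts to get $\int u_{xxx}m^2\,dx$, then use $u_{xxx}=u_x-m_x$ (from $u_{xx}=u-m$) together with $\int m_xm^2\,dx=\tfrac13\int\p_x(m^3)\,dx=0$, reducing it to $\int u_xm^2\,dx$. This is controlled by the conserved $H^1$-norm and a Sobolev embedding: $\int u_xm^2\le\|u_x\|_{L^2}\|m\|_{L^4}^2\le c\|u_0\|_{H^1}\|m\|_{H^1}^2$, which is only quadratic and therefore harmless. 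For the forcing terms I would invoke Corollary \ref{cor2.1} and Lemma \ref{lem2.2a} together with $\|u\|_{H^1}=\|u_0\|_{H^1}$: one has $\int m\,\p_x h(u)\le\|m\|_{L^2}\|h(u)\|_{H^1}\le c\|u_0\|_{H^1}\|m\|_{L^2}$, while $\int m_x\,\p_x^2h(u)\le\|m_x\|_{L^2}\|h(u)\|_{H^2}\le c\|m_x\|_{L^2}\|u\|_{H^2}=c\|m_x\|_{L^2}\|m\|_{L^2}\le c\,y$, where Lemma \ref{lem2.2a} is used with $t=1$ and $s=1,2$ and the identity $\|u\|_{H^2}=\|m\|_{L^2}$ is invoked.

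Assembling the two estimates yields $y'\le C_1y+C_2\sqrt{y}$; setting $z=\sqrt{y}=\|m\|_{H^1}$ gives the linear inequality $\dot z\le\tfrac12C_1z+\tfrac12C_2$, whose Gronwall solution is $z(t)\le e^{C_1t/2}\|m_0\|_{H^1}+\tfrac{C_2}{C_1}(e^{C_1t/2}-1)$. Absorbing the additive part through $C_2\le c\|u_0\|_{H^1}\le c\|m_0\|_{H^1}$ factors out $\|m_0\|_{H^1}$ and produces a differentiable $\sigma(t)$ with $\|u\|_{H^3}=\|m\|_{H^1}\le\sigma(t)\|m_0\|_{H^1}$ on $[0,T)$. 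The main obstacle, as indicated, is the term $-2\int u_{xx}m\,m_x$: the whole argument hinges on the cancellation $u_{xxx}=u_x-m_x$ and $\int m_xm^2=0$ that demotes it from cubic to quadratic order; and, in contrast to $\int u_xm^2$, the term $\int u_xm_x^2$ cannot be handled by an $L^4$-embedding (since $m_x\notin L^4$ for $m\in H^1$) and truly requires the pointwise lower bound $u_x>-\kappa$.
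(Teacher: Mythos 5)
Your proof is correct and follows essentially the same route as the paper: an $H^1$ energy estimate for $m$, the cancellation $\int m^2m_x\,dx=0$ to tame the cross term $-2\int u_{xx}mm_x\,dx$, the lower bound $u_x>-\kappa$ on the sign-definite terms, the conserved $H^1$ norm together with Lemma \ref{lem2.2a}/Corollary \ref{cor2.1} for the forcing, and Gronwall to produce $\sigma(t)$. The only (harmless) variations are that you estimate the cross term via $u_{xxx}=u_x-m_x$ and an $L^4$ bound where the paper uses $\langle u_{xx}m,m_x\rangle=\langle u,mm_x\rangle$ and $\Vert u\Vert_{L^\infty}$, and that your forcing estimate yields $y'\leq C_1y+C_2\sqrt{y}$ rather than the paper's purely quadratic $y'\leq c_1 y$, both of which close under Gronwall.
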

\begin{proof}
Let $\langle\cdot\,,\, \cdot\rangle$ denote the usual inner product in $L^2(\mathbb{R})$. Then
$$\frac{d}{dt}\Vert m\Vert^2_{L^2}= 2\langle m_t\,,\, m\rangle.$$
From \eqref{gCH}, we have $m_t=\partial_x h(u) - (u+\Gamma)m_x - 2u_xm$, which yields
$$\langle m_t\,,\, m\rangle = \langle \partial_xh(u)\,,\,m\rangle - \langle um_x\,,\, m\rangle - \langle \Gamma m_x\,,\,m\rangle - 2\langle u_xm\,,\, m\rangle.$$
Observing that $\langle um_x\,,\, m \rangle = -\frac{1}{2} \langle u_x\,,\, m^2 \rangle, \langle u_xm\,,\, m\rangle = \langle u_x\,,\, m^2\rangle,$ the condition
$$\frac{d}{dt}\Vert m\Vert^2_{L^2}= -3\langle u_x\,,\,m^2\rangle - 2\Gamma\langle m_x\,,\, m\rangle +2 \langle \partial_x h(u)\,,\,m\rangle$$
is obtained. For the term $\Vert m_x\Vert^2$, we calculate the derivative of \eqref{gCH} with respect to $x$ to obtain
$$m_{tx} =\partial_x^2 h(u) - (u+\Gamma)m_{xx} - 3u_xm - 2u_{xx}m,$$
which means that
$$\langle m_{tx}\,,\, m_x \rangle = \langle \partial_x^2h(u)\,,\, m_x \rangle- \langle um_{xx}\,,\, m_x\rangle- \langle \Gamma m_{xx}\,,\, m_x\rangle - 3 \langle u_xm_x\,,\, m_x\rangle-2\langle u_{xx}m\,,\, m_x\rangle.$$
Given that $\langle (u+\Gamma)m_{xx}\,,\, m_x\rangle = -\frac{1}{2}\langle u_x\,,\, m_x^2\rangle, \langle u_xm_x\,,\, m_x\rangle = \langle u_x\,,\,m_x^2 \rangle$ and
$$0 = \frac{1}{3}\int_{\R}\partial_x m^3dx = \int_{\R} m^2m_xdx = \langle u\,,\, mm_x\rangle - \langle u_{xx}m\,,\, m_x\rangle,$$
the derivative of the term $\Vert m_x\Vert_{L^2}^2$ is written as
$$\frac{d}{dt} \Vert m_x\Vert^2_{L^2} = 2\langle m_{tx}\,,\, m_x\rangle=-5 \langle u_x\,,\,m_x^2 \rangle - 4 \langle u\,,\, mm_x\rangle + 2\langle F\,,\, m_x\rangle,$$
where $F:=\partial_x^2 h(u) - \Gamma m_{xx}$. Therefore,
\begin{align*}
\frac{d}{dt}\Vert m\Vert^2_{H^1} = &\frac{d}{dt}(\Vert m\Vert^2_{L^2} +\Vert m_x\Vert^2_{L^2})\\
=& -3\langle u_x\,,\, m^2\rangle - 5 \langle u_x\,,\, m_x^2\rangle - 4 \langle u\,,\, mm_x\rangle + 2 \langle (1-\partial_x)\partial_x h(u)\,,\,m \rangle - 2\Gamma \langle m_{xx}-m\,,\,m_x \rangle\\
=& -3\langle u_x\,,\, m^2\rangle - 5 \langle u_x\,,\, m_x^2\rangle - 4 \langle u\,,\, mm_x\rangle + 2 \langle (1-\partial_x)\partial_x h(u)\,,\,m \rangle,
\end{align*}
where in the last equality we used the fact that $u,u_x,u_{xx},u_{xxx}$ vanish at inifinity to obtain $\langle m_{xx}-m\,,\,m_x \rangle=0$. Since $u_x>-\kappa$, we have $-\langle u_x\,,\, m^2\rangle <\kappa \Vert m\Vert_{L^2}^2$ and $-\langle u_x\,,\, m_x^2\rangle < \kappa \Vert m_x\Vert_{L^2}^2$. Moreover, from the Sobolev embedding theorem with $s=1$, Lemma 1 of \cite{mustafa} and \eqref{conservation}, the estimate $\Vert u\Vert_{L^{\infty}}\leq \Vert u\Vert_{H^1}=\Vert u_0\Vert_{H^1}$ holds. Then
\begin{align*}
-4 \langle u\,,\, mm_x\rangle \leq& 4 \langle \vert u\vert\,,\, \vert m\vert \vert m_x \vert\rangle \leq 4\Vert u\Vert_{L^{\infty}} \langle \vert m\vert\,,\, \vert m_x\vert \rangle\\
\leq& \Vert u\Vert_{L^{\infty}}\Vert m\Vert^2_{L^2}\Vert m_x\Vert_{L^2}^2 \leq 2\Vert u\Vert_{L^{\infty}}(\Vert m\Vert^2_{L^2} + \Vert m_x\Vert_{L^2}^2)\\
\leq& 2\Vert u_0\Vert_{H^1} \Vert m\Vert^2_{H^1},
\end{align*}
where in the last inequality we used the algebraic condition $2ab\leq a^2+b^2$. Therefore, there exists a positive constant $C$ depending only on $\Vert u_0\Vert_{H^1}$ such that 
\begin{align*}
\frac{d}{dt} \Vert m\Vert^2_{H^1} \leq& C \Vert m\Vert_{H^1} + 2 \langle (1-\partial_x^2)\partial_xh(u)\,,\, m\rangle.
\end{align*}
From the Cauchy-Schwarz inequality and the same algebraic condition, we have
\begin{align*}
\langle (1-\partial_x^2)\partial_xh(u)\,,\, m\rangle\leq \frac{1}{2}(\Vert (1-\partial_x^2)\partial_x h(u)\Vert^2_{L^2} + \Vert m \Vert_{L^2}^2).
\end{align*}
However, since $h(0)=0$, Lemma \ref{lem2.2a} and Corollary \ref{cor2.1} guarantee the existence of a positive constant $c_0$ such that $\Vert h(u)\Vert_{H^3}\leq c_0\Vert u\Vert_{H^3}$ . Therefore, 
$$\Vert (1-\partial_x^2)\partial_xh(u)\Vert_{L^2} = \Vert \partial_xh(u)\Vert_{H^2} \leq \Vert h(u)\Vert_{H^3} \leq c_0 \Vert u\Vert_{H^3} = c_0 \Vert m\Vert_{H^1}.$$
%from Lemma 4 of \cite{const-mol}, there exists a positive constant $c_0$ such that $\Vert (1-\partial_x^2)\partial_x h(u)\Vert^2_{L^2}\leq c_0 \Vert m\Vert_{H^1}$.

We can then construct another positive constant $c_1$ such that
\begin{align*}
\frac{d}{dt} \Vert m\Vert_{H^1}^2 \leq c_1 \Vert m\Vert^2_{H^1},
\end{align*}
which after simplification yields
\begin{align*}
\frac{d}{dt} \Vert m\Vert_{H^1} \leq c_1 \Vert m\Vert_{H^1}.
\end{align*}
Finally, Gronwal's inequality tells that
\begin{align*}
\Vert m\Vert_{H^1} \leq \sigma(t) \Vert m_0\Vert_{H^1},
\end{align*}
where $\sigma(t)=e^{c_1t}.$ The conclusion of the theorem follows from
\begin{align*}
\Vert u\Vert_{H^3} = \Vert \Lambda^2u\Vert_{H^1} = \Vert m\Vert_{H^1}\leq \sigma(t)\Vert m_0\Vert_{H^1}.
\end{align*}
\end{proof}

Due to Proposition \ref{prop2.3}, for the existence of global solutions of \eqref{cauchy} it is enough to find a lower bound for the first $x$ derivative. The next lemma is dedicated to this purpose.

\begin{lemma}\label{lem2.2}
Let $u$ be the solution of \eqref{cauchy} associated with an initial data $u_0\in H^3(\R)$ and $m_0=u_0-u_0''$. Assume that $m_0\in L^1(\R)\cap H^1(\R)$, its sign does not change and $\text{sign}(m) = \text{sign}(m_0)$. Then
\begin{enumerate}
\item the signs of $u$ and $u_0$ do not change and $\text{sign}(u) = \text{sign}(u_0)$;
\item for any $(t,x)\in [0,T)\times \R$ we have $u_x\geq -\Vert m_0\Vert_{L^1}.$
\end{enumerate}
\begin{proof}
See \cite{priigorjde}, Corollary 4.2.
\end{proof}
\end{lemma}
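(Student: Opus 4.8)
The plan is to reduce both claims to the integral representation of $u$ in terms of $m$ provided by the inverse Helmholtz operator. Since $u\in C^1([0,T),H^2(\R))$, for each fixed $t$ we have $m(t,\cdot)=\Lambda^2 u(t,\cdot)\in H^1(\R)\subset L^2(\R)$, so $u=\Lambda^{-2}m=\tfrac12 e^{-|x|}\ast m$ and
\[
u(t,x)=\frac{e^{-x}}{2}\int_{-\infty}^{x}e^{y}m(t,y)\,dy+\frac{e^{x}}{2}\int_{x}^{\infty}e^{-y}m(t,y)\,dy .
\]
The single quantitative ingredient I would extract first is that the $L^1$ norm of $m$ is conserved. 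The hypotheses force $m(t,\cdot)$ to have the same constant sign as $m_0$, so $\|m(t,\cdot)\|_{L^1}=\big|\int_\R m\,dx\big|$; combined with Lemma \ref{lem2.1}, which gives $\int_\R m\,dx=\int_\R m_0\,dx$, this yields $\|m(t,\cdot)\|_{L^1}=\|m_0\|_{L^1}$ for all $t\in[0,T)$. This identity is exactly where the sign hypothesis does its work.

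For item (1) I would only use positivity of the kernel $\tfrac12 e^{-|x-y|}>0$. If $m_0\ge0$ then $m(t,\cdot)\ge0$ and the displayed integral is nonnegative, so $u(t,x)\ge0$; evaluating the same formula at $t=0$ gives $u_0\ge0$, hence $\sign(u)=\sign(u_0)$ and neither changes sign. The case $m_0\le0$ is identical with all inequalities reversed.

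For item (2) I would differentiate the representation; by the Leibniz rule the two boundary contributions cancel and
\[
u_x(t,x)=-\frac{e^{-x}}{2}\int_{-\infty}^{x}e^{y}m(t,y)\,dy+\frac{e^{x}}{2}\int_{x}^{\infty}e^{-y}m(t,y)\,dy .
\]
When $m\ge0$ the second term is nonnegative, while in the first term $0<e^{y-x}\le1$ on $\{y\le x\}$, so $\tfrac{e^{-x}}{2}\int_{-\infty}^x e^y m\,dy\le\tfrac12\int_{-\infty}^x m\,dy\le\tfrac12\|m\|_{L^1}=\tfrac12\|m_0\|_{L^1}$; hence $u_x\ge-\tfrac12\|m_0\|_{L^1}\ge-\|m_0\|_{L^1}$. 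When $m\le0$ the two terms swap roles and bounding the second with $0<e^{x-y}\le1$ on $\{y\ge x\}$ gives the same estimate. In both cases the bound is uniform in $(t,x)$, which proves the claim (in fact with the sharper constant $\tfrac12$).

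The step I would be most careful about is the propagation of integrability that makes the conserved-norm identity meaningful: Lemma \ref{lem2.1} presupposes that $m(t,\cdot)$ is integrable, so one must argue that the standing assumption $m_0\in L^1(\R)\cap H^1(\R)$ together with the sign invariance keeps $m(t,\cdot)\in L^1(\R)$, with $\|m(t,\cdot)\|_{L^1}$ equal to the finite conserved value $\big|\int_\R m_0\,dx\big|$ rather than growing in time. This is the only place the argument is not purely mechanical, and it is precisely the hypothesis that cannot be weakened, since it prevents $L^1$ mass from leaking to infinity and is what keeps the lower bound on $u_x$ uniform in $t$. The differentiation under the integral sign is routine because $m(t,\cdot)\in H^1(\R)\hookrightarrow L^\infty(\R)$ is bounded and integrable.
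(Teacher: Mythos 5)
Your argument is correct and is essentially the proof the paper delegates to \cite{priigorjde} (Corollary 4.2): the splitting of $u=\tfrac12 e^{-|x|}\ast m$ into the two exponentially weighted integrals is exactly the computation the authors themselves carry out in the proof of Theorem \ref{teo1.2}, and your use of Lemma \ref{lem2.1} together with the constant sign of $m$ to get $\Vert m(t,\cdot)\Vert_{L^1}=\Vert m_0\Vert_{L^1}$ is the standard way the $L^1$ control is propagated in time. Your estimate is in fact sharper by a factor of $\tfrac12$, which of course still implies the stated bound $u_x\geq -\Vert m_0\Vert_{L^1}$.
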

\noindent\textbf{Proof of Theorem \ref{teo1.1}:} We are now in conditions to prove Theorem \ref{teo1.1} for $u\in C^1([0,T),H^2(\R))$. For $u_0\in H^3$, let $u$ be the unique local solution of \eqref{cauchy}. Assume that $m_0\in L^1(\R)\cap H^1(\R)$ is such that $\sign{(m)}=\sign{(m_0)}$. From Lemma \ref{lem2.2} and Proposition \ref{prop2.3} we conclude that $u$ does not blow up at finite time and, consequently, it exists globally in $H^2(\R)$.

We shall extend the argument to $s>3/2$ by density. Since the solution exists globally as a function of $x$ in $H^2(\R)$, for $s\geq 2$ we have $H^s(\R)\subset H^2(\R)$ and the property of $u$ being global also holds for $H^s(\R)$. If $s\in(3/2,2]$, then $H^2(\R)\subset H^s(\R)$ and the global solution $u$ belongs to $H^s(\R)$. That being so, Theorem \ref{teo1.1} is proven for $s>3/2$.\hfill$\square$

\textbf{Proof of Theorem \ref{teo1.2}}: If $\sign{(m)}=\sign{(m_0)}$, then  $\text{sign}\,u=\text{sign}\,m$ in view of the relation $u=\f{e^{-|x|}}{2}\ast m$. In fact,
$$
u(t,x)=\f{1}{2}\int_\R e^{-|x-\xi|}m(t,\xi)d\xi=\f{1}{2}e^{-x}\int_{-\infty}^xe^\xi m(t,\xi)d\xi+\f{1}{2}e^{x}\int^{\infty}_xe^{-\xi} m(t,\xi)d\xi.
$$
Calculating $u_x$, we have
$$
u_x(t,x)=-\f{1}{2}e^{-x}\int_{-\infty}^xe^\xi m(t,\xi)d\xi+\f{1}{2}e^{x}\int^{\infty}_xe^{-\xi} m(t,\xi)d\xi.
$$
Since $\text{sign}\,m=\text{sign}\,m_0$, then $m(t,x)\leq 0$ if $ x\leq q(t,x_0)$ and $m(t,x)\geq0$ if $x\geq q(t,x_0)$, where $q$ is the function in \eqref{aux}. Therefore,
$$
\ba{lcl}
u_x(t,x)&=&\ds{-\f{1}{2}e^{-x}\int_{-\infty}^x e^\xi m(t,\xi)d\xi-\f{1}{2}e^{x}\int_x^\infty e^{-\xi}m(t,\xi)d\xi}\\
\\
&&\ds{+\f{1}{2}e^{x}\int_x^\infty e^{-\xi}m(t,\xi)d\xi+\f{1}{2}e^{x}\int^{\infty}_xe^{-\xi} m(t,\xi)d\xi}\\
\\
&=&\ds{-u(t,x)+e^{x}\int^{\infty}_xe^{-\xi} m(t,\xi)d\xi}.
\ea
$$
Consequently, provided that $x\geq q(t,x_0)$ we have $u_x(t,x)\geq -u(t,x)$. Similarly,
$$
u_x(t,x)=u(t,x)-e^{-x}\int_{-\infty}^xe^{\xi} m(t,\xi)d\xi,
$$
which implies that $u_x(t,x)\geq u$, if $x\leq q(t,x_0)$.

Therefore, $u_x(t,x)\geq -\|u\|_{L^\infty(\R)}$. Since $\|u\|_{L^\infty(\R)}\leq \|u\|_{H^1(\R)}\leq \|u_0\|_{H^1(\R)}$, we arrive at $- \|u_0\|_{H^1(\R)}\leq u_x(t,x)$.
\hfill$\square$

\section{Criteria for occurrence of wave breaking}\label{Sec3}

In this section we will exhibit a criteria for the occurrence of wave breaking. In what follows, given an initial data $u_0\in H^3(\R)$, we will initially approach wave breaking by imposing some conditions to obtain an ordinary differential inequality for the $x$ derivative of the local solution $u\in C^1([0,T),H^2(\R))$. The bounds for this inequality will lead to a certain contradiction that will imply wave breaking.%After obtaining the sufficient conditions for $u_0\in H^2(\R))$, Theorem \ref{teo1.3} will follow from the exact same density argument used for Theorem \ref{teo1.1}.

%But first we shall recall the definition of wave breaking of \eqref{cauchy}

%Suppose that a solution $u$ of \eqref{cauchy} blows-up at a finite time $T$. We call this blow-up a wave breaking if
%\begin{align*}
%    \sup\limits_{(t,x)\in[0,T)\times \R}\vert u(t,x)\vert <\infty, \quad \limsup \{\sup\vert u_{x}(t,x)\vert\} =\infty.
%\end{align*}
%In other words, according to a wave breaking can only occur if, opposed to global solutions, the first derivative $u_x$ is not bounded from below.

We start with equation \eqref{gCH}. After applying the operator $\Lambda^{-2}$, it is possible to rewrite it in the evolutive form
\begin{align*}
    u_t + (u+\Gamma)u_{x}+u_x^2 = \Lambda^{-2}\partial_x h(u) - \Lambda^{-2}\partial_x\left(u^2+\frac{u_x^2}{2}\right),
\end{align*}
where $h$ is given by \eqref{h}.

After calculating the $x$ derivative and using the relation $\Lambda^{-2}\partial_x^2 = \Lambda^{-2}-1$, it is obtained
\begin{align}\label{utx}
    u_{tx}+(u+\Gamma)u_{x}+\frac{1}{2}u_x^2 = u^2 + \Lambda^{-2}h(u) - h(u) - \Lambda^{-2}\left(u^2+\frac{u_x^2}{2}\right).
\end{align}

Similarly to the global well-posedness proved in the previous section, we will also initially consider the criteria for wave breaking in $H^2(\R)$ and then extend the result to $s>3/2$ by the same density argument. We start with a lemma that will be of extreme importance.

\begin{lemma}\label{lem4.1}
Let $T>0$ and $v\in C^1([0,T),H^2(\R))$ be a given function. Then, for any $t\in[0,T)$, there exists at least one point $\xi(t)\in \R$ such that
\begin{align}\label{inf}
    y(t) :=\inf\limits_{x\in\R}v_x(t,x) = v_x(t,\xi(t))
\end{align}
and the function $y$ is almost everywhere differentiable in $(0,T)$, with $y'(t)=v_{tx}(t,\xi(t))$ almost everywhere in $(0,T)$.
\end{lemma}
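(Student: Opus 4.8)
The plan is to establish three things in turn: that the infimum defining $y(t)$ is attained at a finite point $\xi(t)$, that $y$ is locally Lipschitz in $t$ (hence differentiable almost everywhere by Lebesgue's theorem on Lipschitz functions), and finally that its derivative coincides with $v_{tx}(t,\xi(t))$ at every point of differentiability.

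For the attainment, I would first note that $v(t,\cdot)\in H^2(\R)$ gives $v_x(t,\cdot)\in H^1(\R)\subset C_0(\R)$ by the Sobolev embedding, so for each fixed $t$ the function $x\mapsto v_x(t,x)$ is continuous and vanishes at infinity. Since it tends to $0$ at $\pm\infty$ we automatically have $y(t)=\inf_x v_x(t,x)\le 0$. If $y(t)<0$, choose $R>0$ with $|v_x(t,x)|<|y(t)|/2$ for $|x|>R$; the infimum over the compact set $[-R,R]$ then equals $y(t)$ and is attained by continuity. If instead $y(t)=0$, then $v_x(t,\cdot)\ge 0$; integrating and using that $v(t,\cdot)$ vanishes at $\pm\infty$ forces $\int_\R v_x\,dx=0$, whence the nonnegative continuous function $v_x(t,\cdot)$ vanishes identically and the infimum is attained everywhere. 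This disposes of all cases.

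For the regularity in $t$, I would use that $v\in C^1([0,T),H^2(\R))$ implies, for each fixed $x$, that $t\mapsto v_x(t,x)$ is $C^1$ with derivative $v_{tx}(t,x)$ (compose the Fr\'echet derivative $v_t(t,\cdot)$ with the bounded maps $\partial_x\colon H^2\to H^1$, $H^1\hookrightarrow C_b$, and evaluation at $x$). On a compact subinterval $[0,T']\subset[0,T)$ the quantity $\|v_t(t,\cdot)\|_{H^2}$ is bounded, so $\|v_{tx}(t,\cdot)\|_{L^\infty}\le L$ uniformly in $t$. The fundamental theorem of calculus then gives $|v_x(t,x)-v_x(s,x)|\le L|t-s|$ uniformly in $x$, and combined with the elementary inequality $|\inf_x f-\inf_x g|\le\sup_x|f-g|$ this yields $|y(t)-y(s)|\le L|t-s|$. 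Thus $y$ is locally Lipschitz, hence differentiable almost everywhere in $(0,T)$.

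Finally, to identify the derivative, fix a point $t_0$ where $y'(t_0)$ exists and a minimizer $\xi=\xi(t_0)$ with $y(t_0)=v_x(t_0,\xi)$. Since $y(t)\le v_x(t,\xi)$ for every $t$, one has $y(t)-y(t_0)\le v_x(t,\xi)-v_x(t_0,\xi)$; dividing by $t-t_0$ and letting $t\to t_0^+$ and $t\to t_0^-$ (the sign change reversing the inequality) sandwiches $y'(t_0)$ between $v_{tx}(t_0,\xi)$ from both sides, giving $y'(t_0)=v_{tx}(t_0,\xi)$. I expect the only genuinely delicate points to be the attainment in the borderline case $y(t)=0$ and the clean passage from Lipschitz continuity to almost-everywhere differentiability with the correct one-sided limits; the remaining estimates are routine consequences of the $C^1([0,T),H^2)$ regularity.
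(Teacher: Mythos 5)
Your proof is correct, and all three steps (attainment via the embedding $H^1(\R)\subset C_0(\R)$ with the borderline case $y(t)=0$ handled by integrating the nonnegative derivative, local Lipschitz continuity of $y$ from the uniform bound on $\|v_{tx}(t,\cdot)\|_{L^\infty}$, and the two-sided sandwich at a minimizer to identify $y'(t_0)$) are sound. The paper does not prove this lemma itself but cites Theorem 2.1 of Constantin--Escher (Acta Math.\ 1998) and Theorem 5 of Escher's lecture notes; your argument is essentially the standard proof given there, so no further comparison is needed.
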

\begin{proof}
See Theorem 2.1 in \cite{const1998-2} or Theorem 5 in \cite{Escher1}.
\end{proof}

Given $u_0\in H^3(\R)$, let $u\in C^1([0,T),H^2(\R))$ be the unique solution associated to the Cauchy problem of \eqref{cauchy}. Defining $$y(t) = \inf\limits_{x\in \R} u_x(t,x),$$ Lemma \ref{lem4.1} tells that, for any $t\in [0,T)$, there is a point $\xi(t)$ such that $u_x(t,\xi(t))$ reaches its minimum value. Evaluating \eqref{utx} at this point $(t,\xi(t))$ and observing that $u_{xx}(t,\xi(t))=0$, the following ordinary differential equation is obtained
\begin{align*}
    y'(t) + \frac{1}{2}y^2(t) = u^2(t,\xi(t)) - F(u(t,\xi(t))) + G(u),
\end{align*}
where $F(u) = \Lambda^{-2}\left(u^2(t,\xi(t))+\frac{1}{2}y^2(t)\right)$ and $G(u) = \Lambda^{-2} h(u(t,\xi(t))) - h(u(t,\xi(t)))$.

Since equation \eqref{cauchy} conserves the $H^1$-norm, a result from \cite{Escher1} (see page 106-107) tells that $$F(t)\geq \frac{1}{2}u^2(t,\xi(t)),\quad \forall t\in[0,T).$$ Thus, using again the results of \cite{Escher1} (see page 107), $u^2(t,\xi(t))\leq\Vert u_0\Vert^2_{H^1}/2$ and
$$u^2(t,\xi(t)) - F(u(t,\xi(t))) \leq \frac{1}{2} u^2(t,\xi(t)) \leq \frac{\Vert u_0\Vert^2_{H^1}}{4},$$
transforming the ODE into the differential inequality
\begin{align}\label{ODE}
    y'(t) + \frac{1}{2}y^2(t) \leq \frac{\Vert u_0\Vert^2_{H^1}}{4} + G(u).
\end{align}
We will now deal with the function $G(u)$.

\begin{proposition}\label{prop3.1}
    Let $u_0\in H^3(\R)$ and $u$ be the unique solution of \eqref{cauchy}. If $G(u) = \Lambda^{-2} h(u) - h(u)$, where $h$ is given by \eqref{h}, then there exists a positive integer $p$ such that 
    \begin{align*}
        \vert G(u)\vert \leq 9 K \Vert u_0\Vert^p_{H^1},
    \end{align*}
    where $K = 4\max\{\vert\alpha\vert,\vert\beta\vert/3,\vert\gamma\vert/4,\vert\Gamma\vert\}$.
\end{proposition}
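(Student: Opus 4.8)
The plan is to bound $G(u) = \Lambda^{-2}h(u) - h(u)$ pointwise by passing through the $L^\infty$ norm of each summand, and then to convert the resulting control of $\|u\|_{L^\infty}$ into a bound in terms of $\|u_0\|_{H^1}$ by means of the conservation law \eqref{conservation}. First I would apply the triangle inequality to write $|G(u)| \le |\Lambda^{-2}h(u)| + |h(u)|$. For the first term I would use the representation $\Lambda^{-2} = \frac{e^{-|x|}}{2}\ast(\cdot)$ recalled in Section \ref{Sec2}: since the kernel $\frac{1}{2}e^{-|x|}$ has unit $L^1$ mass, Young's inequality for convolutions yields $\|\Lambda^{-2}h(u)\|_{L^\infty} \le \frac{1}{2}\|e^{-|\cdot|}\|_{L^1}\,\|h(u)\|_{L^\infty} = \|h(u)\|_{L^\infty}$. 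Combining the two estimates reduces the whole problem to controlling $\|h(u)\|_{L^\infty}$, giving $|G(u)| \le 2\|h(u)\|_{L^\infty}$. The convolution is well defined because $h(0)=0$ and $u\in H^s(\R)\subset L^\infty(\R)$ force $h(u)$ to be bounded.

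Next I would estimate $\|h(u)\|_{L^\infty}$ directly from the explicit form \eqref{h}. Writing $|h(u)| \le |\alpha+\Gamma|\,|u| + \frac{|\beta|}{3}|u|^3 + \frac{|\gamma|}{4}|u|^4 \le (|\alpha|+|\Gamma|)|u| + \frac{|\beta|}{3}|u|^3 + \frac{|\gamma|}{4}|u|^4$, and using that each of $|\alpha|,|\Gamma|,|\beta|/3,|\gamma|/4$ is at most $K/4$ by the definition of $K$, I obtain $|h(u)| \le \frac{K}{4}\bigl(2|u| + |u|^3 + |u|^4\bigr)$. The pointwise control $\|u\|_{L^\infty} \le \|u\|_{H^1} = \|u_0\|_{H^1}$, which comes from the Sobolev embedding with $s=1$ together with the conservation of the $H^1$ norm \eqref{conservation}, then lets me replace each occurrence of $|u|$ by $\|u_0\|_{H^1}$.

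Finally I would invoke the exponent $p$ defined by $\|u_0\|_{H^1}^p = \max\{\|u_0\|_{H^1},\|u_0\|_{H^1}^3,\|u_0\|_{H^1}^4\}$. This device treats the two regimes $\|u_0\|_{H^1}\ge 1$ and $\|u_0\|_{H^1}<1$ uniformly: in either case each of $\|u_0\|_{H^1}$, $\|u_0\|_{H^1}^3$, $\|u_0\|_{H^1}^4$ is bounded by $\|u_0\|_{H^1}^p$, whence $2\|u_0\|_{H^1} + \|u_0\|_{H^1}^3 + \|u_0\|_{H^1}^4 \le 4\|u_0\|_{H^1}^p$. Substituting gives $\|h(u)\|_{L^\infty} \le K\|u_0\|_{H^1}^p$ and therefore $|G(u)| \le 2K\|u_0\|_{H^1}^p$, which lies comfortably within the asserted bound $9K\|u_0\|_{H^1}^p$; the generous constant leaves slack for any cruder intermediate estimates one might prefer at the convolution or embedding steps.

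I do not expect a genuine analytic obstacle: the deep input, namely conservation of the $H^1$ norm, is already at hand from \cite{priigorjde}, and the convolution bound is elementary. The only step demanding real care is the last one, where one must verify that the single exponent $p$ dominates all three powers simultaneously across both regimes $\|u_0\|_{H^1}\lessgtr 1$; this case bookkeeping, rather than any estimate, is where an error would most likely slip in.
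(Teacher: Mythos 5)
Your proposal is correct and follows essentially the same route as the paper: split $|G(u)|\le |h(u)|+|\Lambda^{-2}h(u)|$, bound the convolution by the $L^1$ mass of the kernel $\tfrac{1}{2}e^{-|x|}$, control $h(u)$ through $\Vert u\Vert_{L^\infty}\le\Vert u\Vert_{H^1}=\Vert u_0\Vert_{H^1}$, and absorb the three powers into the single exponent $p$. The only (harmless) difference is that you estimate $\Vert h(u)\Vert_{L^\infty}$ pointwise from \eqref{h}, whereas the paper detours through $\Vert h(u)\Vert_{H^1}$ before applying the embedding $H^1(\R)\subset L^\infty(\R)$; your constant $2K$ is simply sharper than the paper's $9K$, which is of no consequence for Theorem \ref{teo1.3}.
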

\begin{proof}
For $h$ given as in \eqref{h}, we have
\begin{align*}
    \Vert h(u)\Vert_{H^1} &\leq (\vert\alpha\vert+\vert\Gamma\vert)\Vert u\Vert_{H^1} + \frac{\vert\beta\vert}{3}\Vert u\Vert_{H^1}^3 + \frac{\vert\gamma\vert}{4}\Vert u\Vert_{H^1}^4\\
    &= (\vert\alpha\vert+\vert\Gamma\vert)\Vert u_0\Vert_{H^1} + \frac{\vert\beta\vert}{3}\Vert u_0\Vert_{H^1}^3 + \frac{\vert\gamma\vert}{4}\Vert u_0\Vert_{H^1}^4\\
    &\leq K (\Vert u_0\Vert_{H^1}+\Vert u_0\Vert_{H^1}^3+\Vert u_0\Vert_{H^1}^4).
\end{align*}
Since $\Vert u\Vert_{L^\infty}\leq \Vert u\Vert_{H^1}$
\begin{align*}
    \vert h(u)\vert &\leq \Vert h(u)\Vert_{L^{\infty}} \leq \Vert h(u)\Vert_{H^1}\\
    &\leq K (\Vert u_0\Vert_{H^1}+\Vert u_0\Vert_{H^1}^3+\Vert u_0\Vert_{H^1}^4).
\end{align*}

On the other hand, we have
\begin{align*}
    \vert \Lambda^{-2}h(u)\vert &= \left\vert \int_{R} \frac{e^{-|x-y|}}{2}h(u(t,y))dy\right\vert\leq 2\Vert h(u)\Vert_{L^{\infty}} \\
    &\leq 2K (\Vert u_0\Vert_{H^1}+\Vert u_0\Vert_{H^1}^3+\Vert u_0\Vert_{H^1}^4).
\end{align*}
Thus
\begin{align*}
    \vert G(u)\vert &\leq \vert h(u)\vert + \vert \Lambda^{-2}h(u)\vert\\
    &\leq 3K (\Vert u_0\Vert_{H^1}+\Vert u_0\Vert_{H^1}^3+\Vert u_0\Vert_{H^1}^4).
\end{align*}
Let $p$ be such that $\Vert u_0\Vert_{H^1}^p = \max\{\Vert u_0\Vert_{H^1},\Vert u_0\Vert_{H^1}^3,\Vert u_0\Vert_{H^1}^4\}$. Then $ \vert G(u)\vert \leq 9K \Vert u_0\Vert_{H^1}^p$.
%\begin{align*}
%    \vert G(u)\vert \leq 9K \Vert u_0\Vert_{H^1}^p.
%\end{align*}
\end{proof}

Proposition \ref{prop3.1} is the last step towards our conditions for the existence of wave breaking. It is not sufficient, nevertheless, once the constants $C$ and $K$ play an important role on Proposition \ref{prop3.1}. We shall introduce a suitable parameter $\sigma$ to calibrate the presence of these constants.

\noindent\textbf{Proof of Theorem \ref{teo1.3}:} Let $\sigma\in \R$ be such that $$0<\sigma \leq \frac{1}{1+36K},$$ where $C$ and $K$ are the constants of Proposition \ref{prop3.1}. Moreover, assume that, given $u_0\in H^3(\R)$ there exists a point $x_0\in \R$ in a way that the condition
$$\sqrt{2\sigma} u_0'(x_0)< \min\{-\Vert u_0\Vert_{H^1}, - \Vert u_0\Vert_{H^1}^{p/2}\}$$
is satisfied, where $p$ is the power provided by Proposition \ref{prop3.1}. Choose $\epsilon \in (0,1)$ such that $$2\sigma (1-\epsilon)(u_0'(x_0))^2\geq (\min\{-\Vert u_0\Vert_{H^1}, - \Vert u_0\Vert_{H^1}^{p/2}\})^2.$$
On the one hand we have $y(0)\leq u_0'(x_0)<0$, where $y$ is given by \eqref{inf}, which means that $y^2(0)\geq (u_0'(x_0))^2$. This implies that 
\begin{align*}
    \Vert u_0\Vert_{H^1}^2 \leq 2\sigma (1-\epsilon)y^2(0),\quad 
    \Vert u_0\Vert_{H^1}^p \leq 2\sigma (1-\epsilon)y^2(0).
\end{align*}
Conversely, from \eqref{ODE} and Proposition \ref{prop3.1} we have
\begin{align}\label{ineq}
\begin{aligned}
    y'(t) + \frac{1}{2}y^2(t) &\leq \frac{1}{4}\Vert u_0\Vert_{H^1}^2 + 9K \Vert u_0\Vert_{H^1}^p \leq \frac{\sigma}{2}(1+36K)(1-\epsilon) y^2(0)\\ 
    &\leq \frac{1}{2}(1-\epsilon)y^2 (0).
\end{aligned}
\end{align}
The argument to finish the proof for this case is now reduced to the same one used for the proof of the Camassa-Holm equation (see \cite{Escher1} page 108). In summary, \eqref{ineq} implies that
$$y'(t)\leq - \frac{1}{4}\epsilon y^2(t),$$
which after using the Gronwall inequality, yields
$$0\geq \frac{1}{y(t)} \geq \frac{1}{y(0)} + \frac{1}{4}\epsilon t.$$
This means that the lifespan $T$ is finite and, according to Proposition \ref{prop2.3}, $u_x$ cannot be bounded from below. Therefore, there exists a $t_0$ such that $\lim\limits_{t\to t_0} y(t)=-\infty$ and the wave breaking occurs. This concludes the proof of Theorem \ref{teo1.3} for $s=2$, and, therefore, for $s>3/2$ by the same density argument used in the previous section.
\hfill$\square$

\section{Integrability results}\label{Sec4}

It is well known that integrable equations have a deep connection with geometry \cite{enriqueimrn,enriquejpa, reyes2002,artur,keti2015}. For this reason, in our previous work \cite{priigorjde} we investigated the relations of \eqref{cauchy} and pseudo-spherical surfaces \cite{reyes2002,keti2015}. Our results indicated that \eqref{cauchy} (with $\Gamma=0$) would only describe pseudo-spherical surfaces if $\beta=\gamma=0$, which is equivalent to say that the equations is essentially reduced to the CH equation. On the other hand, what we did in \cite{priigorjde} {\it suggest} that we could remove the condition $\Gamma=0$ so that the DGH would share the same property.

We observe that the investigation of the geometric integrability of \eqref{cauchy} was a first test for integrability properties of the equation. Moreover, integrability is a vast ocean of possibilities and there are several different definitions, such as  bi-Hamiltonian structure, infinitely many symmetries and/or conservation laws, or the existence of a Lax pair, see \cite{ablo1,ablo2,sok}. It is not our intention to make this work a treatment on integrability properties of equation \eqref{cauchy}, but it aims at investigating integrability of the equation from a different perspective.

An evolution equation $u_t=F$ is said to have a Hamiltonian structure if it can be written as 
$$
u_t=B\f{\de{\cal H}}{\de u},\quad {\cal H}=\int h(u)dx,\quad \f{\de{\cal H}}{\de u}:={\cal E}_u h,\quad {\cal E}_u:=\f{\p}{\p u}-\p_x\f{\p}{\p u_x}+\p_x^2\f{\p}{\p u_{xx}}+\cdots
$$
where ${\cal H}$ is a functional, $B$ is a Hamiltonian operator, and ${\cal E}_u$ is the Euler-Lagrange operator, see chapter 7 of Olver \cite{olverbook}. If the equation has two representations
\bb\label{4.0.1}
u_t=B_1\f{\de{\cal H}_2}{\de u}=B_2\f{\de{\cal H}_1}{\de u},
\ee
where ${\cal H}_1$ and ${\cal H}_2$ are two functionals, while $B_1$ and $B_2$ are Hamiltonian operators satisfying the Jacobi identity, which makes use of Poisson brackets, and such that its linear combination is still a Hamiltonian, then the equation is said to have bi-Hamiltonian structure, see chapter 7 of \cite{olverbook}.

We observe that equation \eqref{cauchy} can be written in two different representations (see also \cite{GLS} in which a similar representation\footnote{Actually, the representation \eqref{bi-hamil} is nothing but the one found in \cite{GLS} under suitable scalings and specific choices of the constants.} was firstly found to \eqref{chines-fluid}):
\bb\label{bi-hamil}
\ba{lcl}
m_t&=&\ds{-B_1\f{\de{\cal H}_2}{\de m}=-B_2\f{\de{\cal H}_1}{\de m}},\\
\\
    B_1 &=& \partial_x(1-\partial_x^2),\\
    \\
    B_2 &=& \ds{\partial_x\left(m\cdot\right) + m\partial_x -\al\p_x- \Gamma\partial_x^3 - \f{2}{3}\beta \partial_x\left(u\partial_x^{-1}(u\partial_x\cdot)\right)}\\
    \\
    &&\ds{- \f{5}{8}\gamma\partial_x \left(u^{3/2}\partial_x^{-1}(u^{3/2}\partial_x\cdot)\right)},\\
    \\
    {\cal H}_1&=&\ds{\f{1}{2}\int\left(u^2+u_x^2\right)dx},\\
    \\
    {\cal H}_2&=&\ds{\left(\f{3}{2}u^2-uu_{xx}-\f{1}{2}u_x^2-\al u-\f{\be}{3}u^3-\f{\gamma}{4}u^4-\Gamma u_{xx}\right)dx}.
    \ea
\ee

Above, $\p_x^{-1}f:=\int_{-\infty}^x f$, whereas $\p_x(m\cdot)f:=\p_x(mf)$ and so on.

Equation \eqref{bi-hamil} suggests that \eqref{cauchy} may have bi-Hamiltonian structure and, therefore, be integrable. However, the Jacobi identity, a necessary step for the proof of existence of a bi-Hamiltonian formulation, is not trivial at all to be checked \cite{DHH}. Moreover, we note that $B_2=B_{02}+B_{12}$, where $B_{12}$ is the part of the operator depending on $\be$ and $\gamma$ and can be interpreted as a deformation of $B_{02}$. We observe that $B_1$ and $B_{02}$ are Hamiltonian operators for the DGH equation \cite{DGH1,DGH2} and, according to \cite{DGH1}, no further deformations of the pair $(B_1,B_{02})$  (involving higher order derivatives) would be compatible with $B_{02}$ and, therefore, we do not believe that \eqref{bi-hamil} might be a bi-Hamiltonian structure to \eqref{cauchy}, unless $B_{12}\equiv0$, which is accomplished if and only if $\be=\gamma=0$, that is equivalent to returning to the DGH equation, and is well known to be integrable \cite{DGH1,DGH2}.

In view of the comments above and the previous results, we shall now prove that the DGH equation is also geometrically integrable, proving formally that the result suggested in \cite{priigorjde} can be extended to this equation. This is our first goal. The second one is to use the techniques introduced by Dubrovin in \cite{dubrovincmp}, see also \cite{dubrovincpam}, to investigate whether equation \eqref{cauchy} admits a perturbed bi-Hamiltonian structure, which would then imply on the existence of an infinite number of {\it approximate symmetries} \cite{dubrovincmp}.

\subsection{Geometric integrability of the Dullin-Gottwald-Holm equation}

We recall a very useful definition, see \cite{chern,keti2015}.

\begin{definition}\label{def4.1}
A differential equation for a real function $u=u(t,x)$ is said to describe pseudo-spherical surfaces if it is equivalent to the structure equations $d\omega_1=\omega_3\wedge\omega_2$, $d\omega_2=\omega_1\wedge\omega_3$ and $d\omega_3=\omega_1\wedge\omega_2$, of a 2-dimensional Riemmanian manifold whose Gaussian (or total) curvature is ${\cal K}=-1$ and its metric tensor is given by $g=\omega_1^2+\omega_2^2$.
\end{definition}

The description of pseudo-spherical surfaces is a peculiar property of an equation and, therefore, those describing one-parameter families of such surfaces warrant a definition (see Definition 2 in \cite{reyes2002}).
\begin{definition}\label{def4.2}
A differential equation is said to be geometrically integrable if it describes a non-trivial one-parameter family of pseudo-spherical surfaces.
\end{definition}

 In \cite{priigorjde} we replaced $u$ by $u-\Gamma$ in \eqref{cauchy}, which is equivalent to setting $\Gamma=0$ in \eqref{cauchy}. With this condition, we used the results established in \cite{keti2015} to prove that \eqref{cauchy}, with $\Gamma=0$, describes pseudo-spherical surfaces if and only if $\beta=\gamma=0$, that is, when the equation is reduced to the Camassa-Holm equation. More precisely, we proved that if
\bb\label{4.2.2}
\ba{l}
\ds{\omega_1=\left(m+b\right)dx-\left[um+(b+1)u+b\mp\eta u_x \right]}dt,\\
\\
\ds{\omega_2=\eta\,dx-[\eta(1+u)\mp u_x]dt},\\
\\
 \ds{\omega_3=\pm\left(m+b+1\right)dx+\left[\eta u_x\mp u(m+1) \mp(u+1)(b+1)\right]dt,}
\ea
\ee
with $b=-1+(\eta^2-\alpha)/2$ and $m=u-u_{xx}$, then equation \eqref{cauchy} arises as the compatibility condition of $d\omega_3=\omega_1\wedge\omega_2$. For further details, see \cite{priigorjde,reyes2002,keti2015}.

%It is worth mentioning that in \cite{reyes2002} it was firstly proven that the CH equation describes pseudo-spherical surfaces.

Our results proved in \cite{priigorjde} {\it suggest} that \eqref{cauchy} would also describe pseudo-spherical surfaces for any value of $\Gamma$. One can argue that we could obtain this result simply replacing $u$ by $u+\Gamma$ in \eqref{4.2.2}, but such a change, although geometrically viable, is not suitable if we want to preserve the solutions of the equation. We note that the solutions we are dealing with in the present work vanish at infinity, but the same does not occur to $u+\Gamma$ unless one assumes $\Gamma=0$.

Our strategy here is to make a change of coordinates preserving the initial data and, from the $1-$forms \eqref{4.2.2}, prove that \eqref{cauchy} describes pseudo-spherical surfaces to any values of $\alpha$ and $\Gamma$ when $\beta=\gamma=0$. We note that this is equivalent to prove that the Dullin-Gotwald-Holm equation describes pseudo-spherical surfaces. Since the Camassa-Holm equation \eqref{CH} is known to be geometrically integrable, we will complete the scheme showing that the Dullin-Gottwald-Holm equation, that is, $\beta=\gamma=0$ and $\Gamma\neq 0$ in \eqref{cauchy}, is integrable in this sense of Definition \ref{def4.2}.

Let us consider the function $F:\R^2\rightarrow\R^2$, defined by 
\bb\label{4.2.3}
F(t,x)=(t,x-\Gamma t).
\ee
It is a global diffeomorphism and, therefore, we can interpret it as a change of coordinates. Let $(\tau,\chi):=F(t,x)$. Note that $F(0,x)=(0,x)$, that is, any point of $\{0\}\times\R$ is a fixed point of the diffeomorphism $F$.

Let $u$ be a solution of the equation
\bb\label{4.2.4}
m_t + um_x + 2u_xm = \alpha u_x 
\ee
and define $u=v\circ F$. Substituting this into \eqref{4.2.4}, we obtain
\bb\label{4.2.5}
n_\tau + vn_\chi + 2v_\chi n = \tilde{\alpha} v_\chi+\Gamma v_{\chi\chi\chi},
\ee
where $n:=v-v_{\chi\chi}$ and $\tilde{\al}=\al+\Gamma$. Moreover, we observe that if $v$ is a solution of \eqref{4.2.5}, then $u=v\circ F$ is a solution of \eqref{4.2.4}. Conversely, if $u$ is a solution of \eqref{4.2.4}, then $v=u\circ F^{-1}$, where $F^{-1}$ denotes the inverse of the function \eqref{4.2.3}, is a solution of \eqref{4.2.5}.

We now observe that the pullback (see Lemma 14.16 on page 361 of \cite{lee}) of the basis of $1-$forms $\{dt,dx\}$ is $F^\ast(dt)=d\tau$ and $F^\ast(dx)=d\chi-\Gamma d\tau $. Let $\theta_i=F^\ast\omega_i$, $i=1,2,3$. Then the triplet \eqref{4.2.2} changes to
\bb\label{4.2.6}
\ba{l}
\ds{\theta_1=\left(n+b\right)d\chi-\left[(v+\Gamma)n+(b+1)v+b(\Gamma+1)\mp\eta v_\chi \right]}d\tau,\\
\\
\ds{\theta_2=\eta\,d\chi-[\eta(1+v+\Gamma)\mp v_\chi]d\tau},\\
\\
 \ds{\theta_3=\pm\left(n+b+1\right)d\chi+\left[\eta v_\chi\mp (v+\Gamma)(n+1) \mp(v+1)(b+1)\mp \Gamma b\right]d\tau,}
\ea
\ee
where $\eta^2=2+2b+\al+\Gamma$.

\noindent\textbf{Proof of Theorem \ref{teo1.4}:} Let us first prove that if $u\in C^{k+2}(\R)$ is a solution of \eqref{4.2.5}, then \eqref{4.2.6} corresponds to the desired triplet. We shall prove to the case
$$
\ba{l}
\ds{\theta_1=\left(n+b\right)d\chi-\left[(v+\Gamma)n+(b+1)v+b(\Gamma+1)-\eta v_\chi \right]}d\tau,\\
\\
\ds{\theta_2=\eta\,d\chi-[\eta(1+v+\Gamma)- v_\chi]d\tau},\\
\\
 \ds{\theta_3=\pm\left(n+b+1\right)d\chi+\left[\eta v_\chi- (v+\Gamma)(n+1)-(v+1)(b+1)- \Gamma b\right]d\tau,}
 \ea
$$
The other choice of sign is similar and can be checked by the same way. On the one hand, we have
\bb\label{4.2.7}
d\theta_3=-\left(n_\tau+v_{\chi}n+vn_\chi-\Gamma v_{\chi\chi\chi}-\eta v_{\chi\chi}+(b+1-\Gamma)v_\chi\right)d\chi\wedge d\tau.
\ee
On the other hand,
\bb\label{4.2.8}
\theta_1\wedge\theta_2=\left(v_\chi n+v_{\chi\chi}\eta +(b-\eta^2)v_\chi\right)d\chi\wedge d\tau.
\ee
Equating \eqref{4.2.7} to \eqref{4.2.8} and taking the relation $\eta^2=2+2b+\al+\Gamma$ into account we obtain $n_\tau + vn_\chi + 2v_\chi n = \alpha v_\chi+\Gamma v_{\chi\chi\chi}$.
\hfill$\square$

%One of the most interesting facts about integrability is that if an equation has one of them, very often it also has the others. For our purposes, we adopt the next definition.
%\begin{definition}\label{def4.3}
%An equation is said to be integrable if it has a bi-Hamiltonian structure.
%\end{definition}

%A strong consequence of the existence of a bi-Hamiltonian is the existence of a recursion operator \cite{olverbook,OR}, which implies on the existence of infinitely many symmetries. While the existence of a bi-Hamiltonian structure imply the presence of many integrable structures, it has some complications, such as the Jacobi identity ,

%According to our results in \cite{priigorjde}, equation \eqref{cauchy} did not pass in the criteria of being integrable in the sense of Definition \ref{def4.2} in general. On the other hand, one might argue that the results we proved are inconclusive since they might change if we had chosen more general conditions than those we used in \cite{priigorjde}. This is a very interesting question and for this reason we retake our investigation about integrability of \eqref{cauchy}. W

\subsection{Bi-Hamiltonian deformations}\label{bi-ha}

Here we apply another test of integrability to \eqref{cauchy} following the ideas presented in \cite{dubrovincmp}. Our main ingredient is the hyperbolic equation
\bb\label{4.1.1}
v_t+a(v)v_x=0,
\ee
which is well-known to have infinitely many symmetries and conservation laws ({\it e.g}, see \cite{igorjnmp,igorcoam} and references therein) and has the representation
\bb\label{4.1.2}
v_t+\{v(x),H_0\}=0,\quad H_0=\int f(v)dx,\quad f''(v)=a(v).
\ee
In \eqref{4.1.2} ${\cal H}_0$ is the unperturbed Hamiltonian, while $\{\cdot,\cdot\}$ denotes a Poisson bracket.

The approach proposed by Dubrovin \cite{dubrovincpam,dubrovincmp} assures the existence and uniqueness of bi-Hamiltonian representations of perturbations of \eqref{4.1.1}. Our use of this approach relies on some observations:
\begin{itemize}
    \item Well-known integrable equations satisfy the conditions in \cite{dubrovincmp};
    \item The Camassa-Holm equation is among the equations investigated by Dubrovin. Then, since \eqref{cauchy} is a generalisation of the mentioned equation, it is natural to check what would be the result of Dubrovin's approach applied to \eqref{cauchy};
    \item We have the uniqueness of a deformed bi-Hamiltonian structure, if it exists.
\end{itemize}

We begin our analysis recalling some results proved in \cite{dubrovincmp}. Below, $\{\cdot,\cdot\}_1$ and $\{\cdot,\cdot\}_2$ denote two different Poisson brackets.

\begin{lemma}\label{lema4.1}
For any $f=f(v)$ the Hamiltonian flow
\bb\label{4.1.3}
\ba{lcl}
&&\ds{v_t+\p_x\f{\delta {\cal H}_f}{\delta v}}=0,\\
\\
\ds{{\cal H}_f}&=&\ds{\int h_f(v)\,dx},\\
\\
\ds{h_f}&=&\ds{f-\epsilon^2\f{c}{24}f'''v_x^2+\epsilon^4\left[\left(pf'''+\f{c}{480}f^{(4)}\right)v_{xx}^2\right.}\\
\\
&&\ds{\left.-\left(\f{cc''}{1152}f^{(4)}+\f{cc'}{1152}f^{(5)}+\f{c^2}{3456}f^{(6)}\right)+\f{p'}{6}f^{(4)}+\f{p}{6}f^{(5)}\right]},
\ea
\ee
where $c=c(v),\,\,p=p(v)$ is a symmetry, modulo ${\cal O}(\epsilon^6)$ of \eqref{4.1.2}. Moreover, the Hamiltonian $H_f$ commute pairwise, in the sense that $\{H_f,H_g\}={\cal O}(\epsilon^6)$ for arbitrary functions $f$ and $g$.
\end{lemma}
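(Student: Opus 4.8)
The plan is to read this statement as a restatement of the perturbative construction of Dubrovin \cite{dubrovincmp}, so that the proof amounts to specialising his general machinery for deformations of a scalar hyperbolic conservation law to the bracket $\p_x$ underlying \eqref{4.1.2}, rather than carrying out a self-contained computation. First I would fix the bookkeeping: one assigns to each monomial $v_x^{k_1}v_{xx}^{k_2}\cdots$ the differential degree $\sum_{j\ge 1} j\,k_j$ and seeks the density as a formal series $h_f=f(v)+\epsilon^2 h^{(2)}+\epsilon^4 h^{(4)}+{\cal O}(\epsilon^6)$, where $h^{(2k)}$ is the most general differential polynomial of degree $2k$ with coefficients depending on $v$. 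The zeroth-order term reproduces \eqref{4.1.2}, and every $H_f$ is, by construction, a Hamiltonian for the undeformed Poisson bracket.

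The heart of the argument is the commutativity requirement $\{H_f,H_g\}={\cal O}(\epsilon^6)$ imposed for \emph{all} pairs $f,g$. Expanding the bracket and the two densities in powers of $\epsilon$ and collecting terms of each differential degree yields a finite hierarchy of linear equations for the unknown coefficient functions. I would solve these order by order. At order $\epsilon^2$ the equation fixes $h^{(2)}$ up to one arbitrary function $c(v)$, giving the term $-\f{c}{24}f'''v_x^2$; at order $\epsilon^4$ the equation fixes $h^{(4)}$ up to a second arbitrary function $p(v)$, producing exactly the displayed combination of $f''',\dots,f^{(6)}$ weighted by $c,c',c'',p,p'$. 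That precisely two free functions survive is the characteristic output of Dubrovin's classification of deformations modulo Miura-type transformations, and the pairwise commutativity is then automatic, since it is the very condition we solved.

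The step I expect to be the genuine obstacle — and the reason this is cited rather than recomputed — is establishing that the obstruction to commutativity is \emph{trivial} at orders $\epsilon^2$ and $\epsilon^4$, so that a single universal density formula, depending only on $c$ and $p$ and valid simultaneously for every $f$, actually exists. In Dubrovin's language this is the vanishing of the relevant Poisson cohomology of the dispersionless bracket in the pertinent degrees (quasi-triviality), which guarantees that each closed deformation is exact up to Miura equivalence. Granting this cohomological input, the explicit coefficients in \eqref{4.1.3} follow by matching the normal form. I would therefore present the proof as a reference to \cite{dubrovincmp}, reproducing only the order-$\epsilon^2$ and order-$\epsilon^4$ normal-form equations so as to make the roles of the functions $c(v)$ and $p(v)$ transparent to the reader.
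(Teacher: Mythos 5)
Your proposal is correct and takes essentially the same route as the paper: the paper's entire proof of this lemma is a citation to Lemma 2.3 of \cite{dubrovincmp}, and you likewise defer to Dubrovin's construction, merely adding an (accurate) sketch of the order-by-order normal-form computation and the quasi-triviality input behind it.
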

\begin{proof}
See Lemma 2.3 in \cite{dubrovincmp}.
\end{proof}
Not all perturbations are bi-Hamiltonian \cite{dubrovincmp}, but we have a very nice result.
\begin{lemma}\label{lema4.2}
If $c(u)\neq0$, the corresponding Hamiltonians \eqref{4.1.3} admit a unique bi-Hamiltonian structure obtained by the deformation of
$$%\bb\label{4.1.4}
\{v(x),v(y)\}_1=\delta'(x-y),\quad \{v(x),v(y)\}_2=q(v(x))\delta'(x-y)+\f{1}{2}q'(v)v_x\delta(x-y)
$$%\ee
with the functions $p$, $q$ and $c$ satisfy the relation
\bb\label{4.1.4}
p(v)=\f{c(v)^2}{960}\left(5\f{c'(v)}{c(v)}-\f{q''(v)}{q'(v)}\right)
\ee
and $s(v)=0$.
\end{lemma}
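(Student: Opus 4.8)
This is a classification-and-uniqueness statement for scalar bi-Hamiltonian deformations of exactly the type established by Dubrovin in \cite{dubrovincmp}, so the most economical route is to cite his result directly; nonetheless, let me sketch the argument one would reconstruct. The plan is to regard both Poisson structures as formal $\epsilon$-deformations of the leading-order pencil
\[
\{v(x),v(y)\}_1=\delta'(x-y),\qquad \{v(x),v(y)\}_2=q(v)\delta'(x-y)+\tfrac{1}{2}q'(v)v_x\delta(x-y),
\]
writing each as a series $\{\cdot,\cdot\}_i=\sum_{k\geq 0}\epsilon^k P_i^{[k]}$ whose coefficients are differential polynomials of the correct differential degree, with the scalar functions $c(v)$, $p(v)$ and $s(v)$ entering at orders $\epsilon^2$ and $\epsilon^4$. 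First I would impose that each series is a genuine Poisson bracket, namely that the Schouten--Nijenhuis bracket $[P_i,P_i]$ vanishes, together with the compatibility $[P_1,P_2]=0$ that makes $P_2-\lambda P_1$ Poisson for every $\lambda$. Expanding these conditions order by order in $\epsilon$ produces a hierarchy of differential relations among $c$, $p$, $s$ and their $v$-derivatives.

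Next I would use the freedom of (quasi-)Miura transformations $v\mapsto v+\sum_{k\geq 1}\epsilon^k F_k$ to put the deformation into a normal form, discarding the exact part of each cocycle. The decisive point is that, after this reduction, the only surviving invariant of the deformed pencil is the central invariant $c(v)$: the vanishing of the bi-Hamiltonian (Poisson) cohomology of a scalar pencil in the relevant degrees guarantees both that a deformation exists for every choice of $c(v)$ and that it is unique up to Miura equivalence once $c$ is fixed. Within this normal form the residual constraints collapse to $s(v)=0$ and to the single relation \eqref{4.1.4} tying $p$ to $q$ and $c$, which I would extract by matching the deformed Hamiltonian density $h_f$ of Lemma \ref{lema4.1} against the bi-Hamiltonian recursion $P_1\,\delta {\cal H}_{f}=P_2\,\delta {\cal H}_{g}$.

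The hard part is twofold. The laborious-but-routine piece is the explicit order-$\epsilon^4$ Schouten-bracket bookkeeping, in which many differential-polynomial terms must be collected and reduced, by repeated integration by parts, to a canonical basis; it is precisely here that the numerical factor $\tfrac{1}{960}$ and the combination $5c'/c-q''/q'$ in \eqref{4.1.4} are pinned down. The genuinely deep step, however, is the uniqueness assertion, which rests on the triviality of the bi-Hamiltonian cohomology in the scalar case; I would not reprove this but would cite the vanishing theorem underlying \cite{dubrovincmp}. Granting that cohomological input, existence follows by solving the order-by-order obstruction equations, uniqueness follows from the cohomology vanishing, and the explicit relation \eqref{4.1.4} together with $s(v)=0$ emerges as the normalized solution, which completes the argument.
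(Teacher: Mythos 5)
Your proposal is correct and takes essentially the same route as the paper: the paper's entire proof of this lemma is the citation ``See Theorem 4.1 in \cite{dubrovincmp}'', and you likewise defer the substantive content (Schouten-bracket computations, Miura normal form, and the bi-Hamiltonian cohomology vanishing that yields uniqueness) to Dubrovin's theorem. Your accompanying sketch of how that theorem is established is a faithful, if condensed, account of the argument in \cite{dubrovincmp} and goes beyond what the paper itself records.
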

\begin{proof}
See Theorem 4.1 in \cite{dubrovincmp}.
\end{proof}

The uniqueness assured by Lemma \ref{lema4.2} motivated us to propose Definition \ref{new}. The key to investigate whether \eqref{cauchy} satisfies Definition \ref{new} is to consider its higher order terms in the derivatives of $u$ as perturbations of a hyperbolic equation of the type \eqref{4.1.1}. Therefore, let us make the change $(t,x,u)\mapsto(\epsilon t,\epsilon x,u/\sqrt{2})$, $0\neq\epsilon\ll 1$ into the variables in \eqref{cauchy}, which then yields (after renaming constants) equation \eqref{4.1.5}, namely

\begin{align*}
    u_t-\epsilon^2u_{txx}-\left(\f{3}{2}uu_x+\al u_x+\be u^2u_x+\gamma u^3u_x\right)+\epsilon^2\left(u_xu_{xx}+\f{1}{2}uu_{xxx}-\Gamma u_{xxx}\right)=0.
\end{align*}
Observe that we re-scaled equation \eqref{cauchy} in the form \eqref{4.1.5} in order to be able to compare our results with those obtained by Dubrovin regarding the CH equation.

%Equation \eqref{4.1.5} has the bi-Hamiltonian structure (see  
%\bb\label{4.1.7}
%\ba{lcl}
%\{u(x),u(y)\}_1&=&\ds{\delta'(x-y)-\epsilon^2\delta'''(x-y)},\\
%\\
%\{u(x),u(y)\}_2&=&\ds{u(x)\delta'(x-y)+\f{1}{2}u_x\delta(x-y)}.
%\ea
%\ee
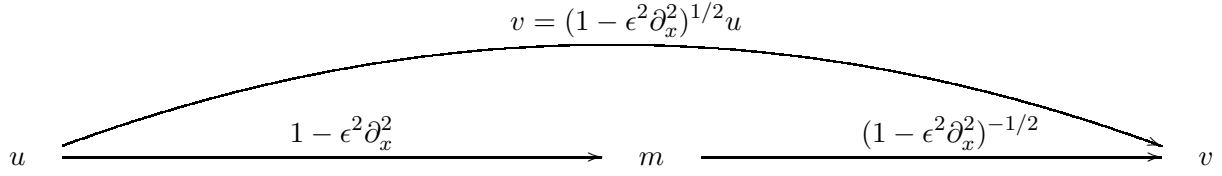
\begin{figure}[ht!]
\centering
\xymatrix{ \ds{u} \quad\ar@/^1.5cm/[rrrrrrrrrrrrr]^{\quad\ds{v=(1-\epsilon^2\p_x^2)^{1/2}}u}\ar[rrrrrrr]^{\ds{1-\epsilon^2\p_x^2}}& & & & & & &\quad \ds{m}\quad \ar[rrrrrr]^{\quad\quad \ds{(1-\epsilon^2\p_x^2)^{-1/2}}}& & & & & &\quad \ds{v}}
\caption{The diagram shows the sequence of transformations needed to transform \eqref{4.1.5} into a perturbed evolution equation.}\label{fig1}
\end{figure}

We observe that lemmas \ref{lema4.1} and \ref{lema4.2} are only applicable to evolution equations and \eqref{4.1.5} it is not initially presented as one. However, we note that $u-\epsilon^2\p_x^2u=(1-\epsilon^2\p_x^2)u$ and, since it is a perturbation of the identity operator, it is invertible and, for $s\in\R$,
\bb\label{4.1.6}
(1-\epsilon^2\p_x^2)^{s/2}=1+s\epsilon^2\p_x^2+s(s+1)\epsilon^4\p_x^4+{\cal O}(\epsilon^6)\approx 1+s\epsilon^2\p_x^2+s(s+1)\epsilon^4\p_x^4.
\ee

It is worth mentioning that this operator commutes with $\p_t$. We now transform the non-evolution equation \eqref{4.1.5} into an evolution one through the following steps:
\begin{itemize}
    \item We firstly identify the term $u_t-\epsilon^2u_{txx}$ as $m$;
    \item In the remaining terms we substitute $u$ by $u=(1-\epsilon^2\p_x^2)v$, $u_x=\p_x\left((1-\epsilon^2\p_x^2)v\right)$ and $u_{xx}=\p_{x}^2\left((1-\epsilon^2\p_x^2)v\right)$;
    \item We use the expansions 
    $$
    \ba{lcl}
    u&=&\ds{v+\f{\epsilon^2}{2}v_{xx}+\f{3}{8}\epsilon^4v_{xxx}}+\cdots,\quad
    u_x=\ds{v_x+\f{\epsilon^2}{2}v_{xxx}+\f{3}{8}\epsilon^4v_{xxxx}}+\cdots,\\
    \\
    u_{xx}&=&\ds{v_{xx}+\f{\epsilon^2}{2}v_{xxxx}+\f{3}{8}\epsilon^4v_{xxxxx}}+\cdots,\quad
    u_{xxx}=\ds{v_{xxx}+\f{\epsilon^2}{2}v_{xxxxx}+\f{3}{8}\epsilon^4v_{xxxxxx}}+\cdots
    \ea
    $$
    \item We then apply \eqref{4.1.6} with $s=-1/2$ to the resulting equation (see Figure \ref{fig1}).
\end{itemize}
The final result of the scheme above is the equation
\bb\label{4.1.7}
\ba{lcl}
v_t&=&-F_{00}-H_0-\epsilon^2\left[F_{01}-F_{10}+H_1+\f{1}{2}\p_x^2\left(F_{00}+H_0\right)\right]\\
\\
&&-\epsilon^4\left[F_{02}-F_{11}+H_2+\f{1}{2}\p_x^2\left(F_{01}-F_{10}+H_1\right)+\f{3}{8}\p_{x}^4\left(F_{00}+H_0\right)\right]+{\cal O}(\epsilon^6),
\ea
\ee
whose terms are
\bb\label{4.1.8}
\ba{lcl}
F_{00}&=&\ds{\f{3}{2}vv_x},\quad \ds{F_{01}=\f{3}{4}(vv_{xxx}+v_xv_{xx})},\quad F_{10}=\ds{\f{1}{2}\left(vv_{xxx}+2v_xv_{xx}\right)},\\
\\
F_{11}&=&\ds{\f{1}{4}\left(vv_{xxxxx}+2v_xv_{xxxx }+3v_{xx}v_{xxx}\right)},\\
\\
F_{02}&=&\ds{\f{3}{16}\left(3vv_{xxxxx}+3v_xv_{xxxx}+2v_{xx}v_{xxx}\right)},\\
\\
H_0&=&\ds{\al v_x+\beta v^2 v_x+\gamma v^3 v_x},\\
\\
H_1&=&\ds{\Gamma v_{xxx}+\f{\al}{2}v_{xxx}+\f{\beta}{2}\left(v^2v_{xxx}+2vv_xv_{xx}\right)+\f{\gamma}{2}\left(3v^2v_xv_{xx}+v^3v_{xxx}\right)}\\
\\
H_2&=&\ds{\left(\f{\Gamma}{2}+\f{3}{8}\alpha\right)v_{xxxxx}+\f{\beta}{8}\left(3v^3v_{xxxxx}+4vv_{xx}v_{xxx}+6vv_xv_{xxxx}+2v_xv_{xx}^2\right)}\\
\\
&&\ds{+\f{\gamma}{8}\left(3v^3v_{xxxxx}+9v^2v_xv_{xxxx}+3vv_xv_{xx}^2\right)}.
\ea
\ee
From now on, we neglect terms ${\cal O}(\epsilon^6)$. 
The Hamiltonian in \eqref{4.1.3} is of the form
\bb\label{4.1.9}
{\cal H}_f={\cal H}_0+\epsilon^2{\cal H}_1+\epsilon^4{\cal H}_2,
\ee
where
\bb\label{4.1.10}
\ba{lcl}
{\cal H}_0&=&\ds{\int f(v)dx},\quad
{\cal H}_1=\ds{-\int \f{c}{24}f'''v_x^2dx},\\
\\
{\cal H}_2&=&\ds{\int\left[\left(pf'''+\f{c}{480}f^{(4)}\right)v_{xx}^2-\left(\f{cc''}{1152}f^{(4)}+\f{cc'}{1152}f^{(5)}+\f{c^2}{3456}f^{(6)}\right)+\f{p'}{6}f^{(4)}+\f{p}{6}f^{(5)}\right]dx},
\ea
\ee

Our aim here is to investigate if equation \eqref{4.1.8} can be written in the form \eqref{4.1.3}. Then, assuming this hypothesis, from \eqref{4.1.8}, \eqref{4.1.10} and the first equation in \eqref{4.1.3} we have the following equations, from the coefficients of $1$, ${\cal O}(\epsilon^2)$ and ${\cal O}(\epsilon^4)$, respectively,
\bb\label{4.1.11}
F_{00}+H_0=\p_x\f{\de{\cal H}_0}{\de v},
\ee
\bb\label{4.1.12}
F_{01}-F_{10}+H_1+\f{1}{2}\p_x^2\left(F_{00}+H_0\right)=\p_x\f{\de{\cal H}_1}{\de v},
\ee
\bb\label{4.1.13}
F_{02}-F_{11}+H_2+\f{1}{2}\p_x^2\left(F_{01}-F_{10}+H_1\right)+\f{3}{8}\p_x^4\left(F_{00}+H_0\right)=\p_x\f{\p{\cal H}_2}{\de v}.
\ee

We now use the equation \eqref{4.1.11} to determine whether, and under which conditions, if any, \eqref{cauchy} satisfies Lemma \ref{lema4.1}. From now on we take all constants of integration as $0$.

Using the first equation in \eqref{4.1.10}, $F_{00}$ and $H_0$ from \eqref{4.1.8} and \eqref{4.1.11} we conclude that 
\bb\label{4.1.14}
f(v)=\f{v^3}{4}+\f{\al}{2}v^2+\f{\be}{12}v^4+\f{\gamma}{20}v^5. 
\ee

Substituting \eqref{4.1.13} into ${\cal H}_1$ in \eqref{4.1.10} and using $F_{00}$, $F_{01}$, $F_{10}$, $H_0$ and $H_1$ from \eqref{4.1.8}, equation \eqref{4.1.12} reads
$$
\ba{lcl}
\ds{\left(\f{\gamma}{4}cv^2+\f{\be}{6}c v+\f{c}{8}\right)v_{xx}}&+&\ds{\left[\f{c'}{16}+\f{\be}{12}c+\left(\f{\gamma}{4}c+\f{\be}{12}c'\right)v+\f{\gamma}{8}c'\right]v_x^2}\\
\\&=&\ds{\left(\gamma v^3+\be v^2+v+\Gamma+\al\right)v_{xx}+\left(\f{1}{2}+2\be v+3\gamma v^2\right)v_{x}^2}.
\ea
$$
The coefficients of $v_{xx}$ and $v_x^2$ yield the following system, respectively,
\bb\label{4.1.15}
\left(\f{\gamma}{4}v^2+\f{\be}{6} v+\f{1}{8}\right)c=\gamma v^3+\be v^2+v+\Gamma+\al
\ee
and
\bb\label{4.1.16}
\left(\f{1}{16}+\f{\be}{12}v+\f{\gamma}{4}v^2\right)c'+\left(\f{\gamma}{4}v+\f{\be}{12}\right)c'=\f{1}{2 }+2\be v+3\gamma v^2.
\ee
Deriving \eqref{4.1.15} with respect to $v$, multiplying the result by $-1/2$ and summing with \eqref{4.1.16}, we conclude that
$$
\be=\gamma=0.
$$
Substituting these values into \eqref{4.1.15} we conclude that $c(v)=8(v+\al+\Gamma)$.

Substituting $\be=\gamma=0$ into \eqref{4.1.14}, using the function $c$, we obtain the following functional from \eqref{4.1.10}
$$
{\cal H}_2=\f{3}{2}\int\left(p(v)v_{xx}^2-s(v)v_x^4\right)dx.
$$
Following the same steps, from the coefficients of $v_{xxxx}$,  $v_x^2v_{xx}$ in result of \eqref{4.1.13} we obtain
$$
3p(v)=v+\al+\Gamma\quad\text{and}\quad 18s(v)+3p''(v)=0,
$$
which implies that $p=(v+\al+\Gamma)/3$ and $s(v)=0$. Note that we can determine $q$ so that Lemma \ref{lema4.2} is satisfied. It is enough to take $q(v)=av+b$, where $a$ and $b$ are two arbitrary constants, with $a\neq0$. This proves Theorem \ref{teo1.5}.

\section{Discussion and Conclusion}\label{Sec5}
In \cite{priigorjde} we proved the local well-posedness of equation \eqref{cauchy}, but it was not possible to infer what sort of behaviour the solution $u$ would have as time approaches the maximal time of existence. More explicitly, we were not able to guarantee existence of global solutions or possibility of blow-up.

Here we established conditions to the existence of global solutions provided that $u_x$ is bounded from below, which is a result consistent with the literature of the CH equation \cite{CE,Escher1}, see also \cite{wujde2009}. We also proved the existence of wave breaking, which again is similar to the CH equation \cite{CE,const1998-2, Escher1}. The main difference between our case and the other is the presence of cubic and quartic nonlinearities, which bring significantly challenges to our proofs, see also \cite{igor}. We observe that conditions to ensure that $\sign{(m)}=\sign{(m_0)}$ have not been found for general forms of the function $h(u)$ in \eqref{h} and it is worth of further investigation.

We observe that of fundamental importance to these demonstrations is the fact that \eqref{cauchy} conserves the $H^1(\R)$-norm, see \cite{priigorjde}, which is a property shared not only with the CH equation, but also with the DGH, Novikov and other similar equations, see \cite{anco,boz,CH,DGH1,DGH2,pri-book,aims, cnmac, priigorjde,HW}.

We note that in \cite{GLS} local existence and wave breaking analysis was considered for the rotation-Camassa-Holm equation \eqref{chines-fluid} in a different way and their conditions are different of those we requested in Theorem \ref{teo1.3}. Compare, for example, Theorem 4.2 of \cite{GLS} with Theorem \ref{teo1.3} of the present work. 

Following our investigation about integrable properties of equation \eqref{cauchy}, we extended the geometric integrability of the CH equation \cite{reyes2002,enriquejpa,enriqueimrn,priigorjde} to the DGH equation. 

Equation \eqref{cauchy}, as mentioned in the Introduction, includes the rotation-Camassa-Holm equation \eqref{chines-fluid}, which has recently proposed \cite{chines-adv,gui-jnl,GLS,chines-jde} as a model for shallow water regime under the Coriolis effect. In \cite{GLS} the authors found two representations (which, up to scaling, is the one given in \eqref{4.1.10}), which suggests the possibility of a bi-Hamiltonian structure to the model.

According to the literature and our own results, we do not believe that equation \eqref{cauchy} has a bi-Hamiltonian structure unless $\be=\gamma=0$, which also implies that \eqref{chines-fluid} would have a bi-Hamiltonian structure if and only if its nonlinearities are of order $2$, that is the same to say that it becomes the DGH equation. Evidence supporting our point is:
\begin{enumerate}
    \item In \cite{priigorjde} we investigated the geometric integrability of \eqref{cauchy} without $\Gamma$. As a consequence, we conclude that such equation would be geometrically integrable only when it is reduced to the CH equation.
    \item In the present work, more specifically in Theorem \ref{teo1.4}, we showed that the DGH equation is also geometrically integrable. These two results suggest that \eqref{cauchy} is only integrable when $\be=\gamma=0$.
    \item Based on the results in \cite{dubrovincmp}, we proposed the definition of quasi-integrability (see Definition \ref{new}) and we showed that \eqref{cauchy} is only quasi-integrable when it is reduced to the DGH equation, which is a well-known integrable equation \cite{DGH1}.
\end{enumerate}
We note that the two integrability tests we applied to \eqref{cauchy} only indicate integrability when the equation becomes the DGH equation. 

In \cite{DGH1} the authors mention that no (higher order) perturbations of the operator $B_2$ with $\be=\gamma=0$ in \eqref{bi-hamil} would make the resulting operator, jointly with $B_1$, a Hamiltonian pair. Therefore, we believe that the representation in \eqref{bi-hamil} are two Hamiltonian representations, but does not bring a bi-Hamiltonian structure to \eqref{cauchy} if either $\be=0$ or $\gamma=0$. We observe, however, that to give a definitive answer whether \eqref{bi-hamil} is a genuine bi-Hamiltonian structure, we should prove that $B_2$ satisfy the Jacobi identity\footnote{Operator $B_1$ in \eqref{bi-hamil}, being constant, satisfies all conditions required to be such an operator.} and then both $B_1$ and $B_2$ would be compatible \cite{olverbook}. Unfortunately we have not succeeded in these points. In \cite{GLS} the authors exhibited a pair of operators similar to the ones in \eqref{bi-hamil} to the model \eqref{chines-fluid}. They called the pair {\it formal bi-Hamiltonian structure}, although it was not claimed that it is really a bi-Hamiltonian structure.

Last but not least, we would like to observe that in \cite{hay} another definition of integrability was proposed motivated by the works of Dubrovin and co-authors \cite{dubrovincpam,dubrovincmp}, and it is based on the existence of commuting formal series (see \cite{hay}, Section 3). Such definition is more focused on the symmetries, while in our case the existence of symmetries will be a consequence of the existence of a bi-Hamiltonian structure (see Lemma \ref{lema4.2}). We strongly believe that our equation would be integrable in the sense proposed by Hay {\it et. al.} under the same restrictions we found.

\section*{Acknowledgements}

The second author would like to express his gratitude to the Mathematical Institute of the Silesian University in Opava for the warm hospitality and work atmosphere found there during his visit. Particular thanks are given to Professor A. Sergyeyev for his kindness and discussions.

The work of I. L. Freire is supported by CNPq (grants 308516/2016-8 and 404912/2016-8).

\end{document}